\newtheorem{thm}{Theorem}[section]
\newtheorem{lem}[thm]{Lemma}
\newtheorem{cor}[thm]{Corollary} 
\newtheorem*{problem}{Problem}
\newtheorem*{oproblem}{Open Problem}
\theoremstyle{definition}
\newtheorem{rem}{Remark}
\newtheorem*{nota}{Notation} 
\newtheorem*{ack}{Acknowledgement} 
\numberwithin{equation}{section}
\newtheorem*{question*}{Question}
 \newcommand{\N}{\mathbb N}
 \newcommand{\V}[1]{\left\Vert #1 \right\Vert}
 \newcommand{\wT}{\widehat{T}}
 \newcommand{\set}[1]{\left\{ #1 \right\}}
 \newcommand{\wh}[1]{\widehat{#1}}
  \newcommand{\sn}[1]{\Vert #1 \Vert}
 \newcommand{\ko}{C^{+}(K_1)^{{-1}}}
 \newcommand{\kt}{C^{+}(K_2)^{{-1}}}
\newcommand{\pp}[1]{P_{L_2}(#1)}
 \newcommand{\pcx}{C_0^+(L_1)}
 \newcommand{\pcy}{C_0^+(L_2)}
 \newcommand{\pcz}{C_0^+(L_i)}
 \newcommand{\Vinf}[1]{\V{#1}}
  \newcommand{\zlb}{\mathbf{0}_{L_2}}
\begin{document}

\title[Norm additive mappings between commutative $C^{*}$-algebras in the range]
{The Cauchy equation and norm additive mappings between positive cones of commutative $C^{*}$-algebras}

\author[D.~Hirota]{Daisuke Hirota}
\dedicatory{%
This is a preprint of a paper submitted to the 
\textit{Journal of Mathematical Analysis and Applications (JMAA)}.
The paper is currently under minor revision.%
}

\address[Yamagata 997-8511 Japan]{National Institute of Technology, Tsuruoka College}
\email{dhirota@tsuruoka-nct.ac.jp}

\subjclass[2020]{47B49, 47B65, 46E15, 46J10} 
\keywords{Preservers, means in $C^{*}$-algebras, norm-additive maps, positive cone, composition operator, Fischer--Musz\'{e}ly functional equation}

\begin{abstract} 
Let \( A_i \) be a commutative \( C^{*} \)-algebra for \( i = 1, 2 \), and denote by \( A_i^{+} \) its positive cone, consisting of all positive elements of \( A_i \). In this paper, 
we investigate surjective, not necessarily continuous mappings \( T: A_1^{+} \to A_2^{+} \) that satisfy the norm equality
\[
\| T(a + b) \| = \| T(a) + T(b) \| \quad (a, b \in A_1^{+}).
\]
We prove that such a mapping \( T \) is necessarily additive and positive homogeneous. 
Furthermore, we show that if the mapping $T:A_{1}^{+}\to A_{2}^{+}$ between the positive cones of two unital commutative $C^{*}$-algebras $A_{i}$
with the unit element \( 1_{A_i} \) for \( i = 1, 2 \),  
and if \( T \) is also injective, 
then $T(1_{A_1})^{-1}T$ is a composition operator.  

This is the submitted version of a paper currently under minor revision for the Journal of Mathematical Analysis and Applications.
\end{abstract}

\maketitle

\section{Introduction and main theorem}
The Cauchy equation
\begin{equation}\label{Ca}
f(x+y)=f(x)+f(y)
\end{equation}
is one of the most classical functional equations and plays a fundamental role in the theory of functional equations.
A function $f$ satisfying \eqref{Ca} is called additive, and such functions are central in functional analysis.
For real-valued functions defined on the real line $\mathbb{R}$, it is known that every Lebesgue measurable solution of \eqref{Ca} is linear, i.e., of the form $f(x) = cx$ for some constant $c \in \mathbb{R}$.
For further details on this classical result, see \cite{AD}.

The motivation for generalizing the Cauchy equation \eqref{Ca} can be traced back to the works of M.~Hossz\'{u} and E.~Vincze.
Hossz\'{u} introduced a functional equation involving squares:
\[
f(x+y)^2 = (f(x) + f(y))^2,
\]
where $f$ is a real-valued function defined on $\mathbb{R}$, and showed in \cite{MH} that this equation is equivalent to the Cauchy equation \eqref{Ca} under mild regularity assumptions.
Building on this idea, Vincze considered a further generalization involving arbitrary natural exponents.
For any natural number $n$, he demonstrated that the equation
\[
f(x+y)^n = (f(x) + f(y))^n
\]
is also equivalent to \eqref{Ca}, provided that $x, y$ belong to a commutative semigroup $(Q, +)$ and $f$ is a complex-valued function on $Q$; see \cite{EV1, EV2, EV3}.

These early developments laid the groundwork for norm-based generalizations of the Cauchy equation.
In this spirit, Fischer and Musz\'{e}ly~\cite{FM} proposed and studied the norm-type functional equation
\begin{equation}\label{FM}
\|T(x+y)\| = \|T(x) + T(y)\| \qquad (x, y \in Q),
\end{equation}
where $T$ is a mapping from a semigroup $(Q, +)$ into a Banach space $(X, \|\cdot\|)$.
Equation~\eqref{FM}, now known as the Fischer--Musz\'{e}ly equation, captures the idea of norm-preserving additivity.
They showed that if $T : Q \to X$ satisfies \eqref{FM} and the target space $X$ is a Hilbert space, then $T$ must be additive.
In contrast, when $X$ is a non-strictly convex Banach space, they constructed examples of mappings $T$ satisfying \eqref{FM} which are not additive.
These counterexamples illustrate that strict convexity is a necessary condition for \eqref{FM} to imply additivity in general Banach spaces.

The relationship between the functional equations \eqref{Ca} and \eqref{FM} has been further investigated in later works.
Ger \cite{RG} proved that for a group $G$ and a Banach space $X$, every mapping $T : G \to X$ satisfying \eqref{FM} is additive if and only if $X$ is strictly convex.
In a related setting, Sch\"{o}pf \cite{PS} provided characterizations of mappings $T : \mathbb{R} \to X$ satisfying \eqref{FM} under continuity or differentiability assumptions.
A refinement was given by Maksa and Volkman \cite{MV}, who showed that any mapping $T$ from a group $G$ into a Hilbert space $X$ satisfying
\[
\|T(x+y)\| \geq \|T(x) + T(y)\| \qquad (x, y \in G)
\]
must be additive.

While Ger's result shows that the additivity of every mapping satisfying \eqref{FM} characterizes the strict convexity of the Banach space $X$, it naturally leads to the question:
\[
\textit{What if the mapping $T : G \to X$, defined on a group $(G, +)$, is assumed to be surjective?}
\]
This case was addressed by Tabor \cite{JT}, who showed that if $T$ is a surjective mapping from a group $G$ onto a Banach space $X$ and satisfies \eqref{FM}, then $T$ must be additive, even if $X$ is not strictly convex.
It should be emphasized that the group structure of $G$ is essential to Tabor's proof and cannot be omitted.
These results suggest that preserving the norm of the sum of two elements may also preserve the underlying algebraic structure of the Banach space.

As part of a parallel line of investigation, L.~Moln\'ar has carried out a series of significant studies on nonlinear transformations defined on the positive cones of \(C^*\)-algebras, that is, the sets of positive (semidefinite or definite) elements.
His work has focused on identifying structural properties preserved by such transformations, particularly those that maintain various operator means, including the arithmetic, geometric, and harmonic means~\cite{Mol1}, as well as power means~\cite{Mol2} and Kubo--Ando means~\cite{Mol3}.
These developments culminated in the joint work of Dong, Li, Moln\'ar, and Wong~\cite{DLMW}, where the authors systematically studied norm-preserving transformations associated with various operator means---especially the arithmetic, geometric, and harmonic means---and showed that such maps, under suitable assumptions, can be extended either to \(*\)-isomorphisms or to Jordan \(*\)-isomorphisms. 

These findings further raise the general question of how norm-related information may influence or reflect other mathematical structures, including underlying algebraic properties.
Against this backdrop, and motivated by his own contributions to the structure theory of positive cones, L.~Moln\'ar proposed the following natural problem:
Can norm-type functional equations such as
\begin{equation*}
\|T(x + y)\| = \|T(x) + T(y)\|
\end{equation*}
be meaningfully analyzed in the framework of positive cones?

To explore this question more concretely, we consider the Banach space \( C_0(L) \), consisting of all complex-valued continuous functions vanishing at infinity on a locally compact Hausdorff space \( L \), equipped with the supremum norm:
\[
\|f\| = \sup_{x \in L} |f(x)| \quad (f \in C_0(L)).
\]
Throughout this paper, we shall use the supremum norm for all continuous functions.
Our attention will be focused on the positive cone
\[
C_0^+(L) = \{ f \in C_0(L) \mid f \geq 0 \},
\]
which is closed under pointwise addition and thus forms a commutative semigroup with respect to addition.
It is worth noting that the Banach space \( C_0(L) \) is not strictly convex, which distinguishes it from the class of Banach spaces considered in earlier results, such as those involving Ger's theorem.

It was within this context that L.~Moln\'ar presented to the author a precise formulation of the problem.
His insight is that, although the equation~\eqref{FM} was initially studied in the general setting of Banach spaces, it can naturally be extended to the setting of positive cones in 
\(C^*\)-algebras in a meaningful and structurally revealing way. 
L.~Moln\'ar posed the following problem:

\begin{problem}[Moln\'{a}r]
Let $\pcz$ be the positive cone of a commutative $C^{*}$-algebra  
 $C_0(L_i)$ for $i=1,2$. 
If $T: \pcx\to \pcy$ is a surjective mapping satisfying
\begin{equation}\label{1-3}
\left\|T(f+g)\right\|=\left\|T(f)+T(g)\right\|\qquad (f,g\in \pcx), 
\end{equation}
then is the map $T$  additive?
\end{problem}

In this paper, we provide a solution to this Problem and establish the following main result.

\begin{thm}\label{thm1}
Let $\pcz$ be the positive cone of a commutative $C^{*}$-algebra $C_0^{+}(L_i)$ for $i=1,2$. 
If $T\colon \pcx \to \pcy$ is a surjective mapping satisfying
\begin{equation*}
\|T(f+g)\|=\|T(f)+T(g)\|\qquad (f,g\in \pcx), 
\end{equation*}
then 
$T$ is additive and positive homogeneous. 
\end{thm}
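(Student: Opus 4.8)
The plan is to first dispose of the easy reductions, isolate additivity as the genuine content, and then attack additivity through the peak-set geometry of the sup norm, using surjectivity to localize. First I would record the basic consequences of the equation. Putting $f=g=0$ gives $\|T(0)\|=2\|T(0)\|$, so $T(0)=0$. Writing $N(f):=\|T(f)\|$, the equation reads $N(f+g)=\|T(f)+T(g)\|$; since $T(f),T(g)\ge 0$ this makes $N$ subadditive and monotone ($N(f)\le N(f+g)$), and taking $g=f$ gives $N(2f)=\|2T(f)\|=2N(f)$. Applying the equation to the pair $(mf,nf)$ gives $N((m+n)f)=\|T(mf)+T(nf)\|$, so $c_n:=N(nf)$ is subadditive with $c_{2n}=2c_n$, and an elementary squeezing between dyadic indices forces $c_n=nc_1$, i.e.\ $N(nf)=nN(f)$. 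Hence $N$ is $\mathbb{Q}^+$-homogeneous, and monotonicity upgrades this to $N(tf)=tN(f)$ for every $t\ge 0$. This already shows that the homogeneity assertion is a consequence of additivity: once $T$ is additive, $f\le g$ forces $T(g)=T(f)+T(g-f)\ge T(f)$ pointwise, so $T$ is order preserving, and the rational homogeneity built into additivity is promoted to $T(tf)=tT(f)$ by squeezing $tf$ between $q_1 f\le tf\le q_2 f$. The entire problem therefore reduces to proving additivity.

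The geometric tool I would build the additivity proof on is the elementary fact that for nonzero $u,v\in\pcy$, writing $M(u)=\{y:u(y)=\|u\|\}$ (a nonempty compact set because $u$ vanishes at infinity),
\[
\|u+v\|=\|u\|+\|v\|\iff M(u)\cap M(v)\neq\emptyset .
\]
Reading the hypothesis as the statement that $\|T(f)+T(g)\|$ depends on $(f,g)$ only through $f+g$, this dictionary converts peaking of the images into the already-understood functional $N$: namely $M(T(f))\cap M(T(g))\neq\emptyset$ exactly when $N(f+g)=N(f)+N(g)$. To pass from norms to pointwise values I would use that, for $a\in\pcy$ and a bump $e$ of height $H$ concentrated near a point $y$, one has $a(y)=\lim_{H\to\infty}\bigl(\|a+e\|-H\bigr)$. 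Because $T$ is surjective, every such bump is $T(w)$ for some $w$, so each norm $\|a+e\|$ occurring here is a norm of a two-term sum $\|T(\cdot)+T(\cdot)\|$ that the equation controls; the plan is to run this localization in parallel for $T(f+g)$ and for $T(f)+T(g)$ and compare them point by point.

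The hard part will be precisely this localization, and the obstruction is the non-strict convexity of the sup norm. The equation only ever controls norms of two-term sums of values of $T$, whereas recovering the value of $T(f)+T(g)$ at a point forces one either to handle three-term expressions such as $T(f)+T(f)+T(w)$ or to treat $T(f)+T(g)$ itself as a value of $T$ --- which is additivity. A naive bootstrap is thus circular, and this is the same wall that, for maps defined on a \emph{group}, Tabor breaks using additive inverses; on a positive cone there are none. My plan to get past it is to let the order (lattice) structure of $\pcz$ play the role of the missing inverses: first promote the scalar identity $N(nf)=nN(f)$ to the pointwise identity $T(nf)=nT(f)$, and then to $\mathbb{Q}^+$-homogeneity of $T$, by combining the peak relation above with comparisons inside order intervals $[0,s]$; and only then derive $T(f+g)=T(f)+T(g)$ by squeezing the candidate difference between lower and upper bumps supplied by surjectivity. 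I expect this order-theoretic replacement for inverses to be the technical heart of the argument. Once additivity is secured, positive homogeneity follows from the monotonicity-and-squeezing already noted in the first step.
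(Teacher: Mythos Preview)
Your preliminary reductions are correct, and your derivation of $\|T(nf)\|=n\|T(f)\|$ from subadditivity together with the doubling identity is in fact cleaner than the paper's route (which first proves a peak-set inclusion $M_{T(nf)}\subset M_{T(f)}$). You have also correctly isolated both the main tool---the equivalence $\|u+v\|=\|u\|+\|v\|\iff M(u)\cap M(v)\neq\emptyset$---and the main obstruction: three-term sums $T(f)+T(g)+T(w)$ are not controlled by the hypothesis. What is missing is a mechanism that actually breaks the circularity you describe; neither of the two devices you propose does so.

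The tall-bump localization $a(y_0)=\lim(\|a+e\|-H)$, combined with the shift identity $\|T(f+g)+T(w)\|=\|T(f)+T(g+w)\|$ (which you should record explicitly---it is the only way the hypothesis ever touches three functions), yields at best $T(f+g)(y_0)\le \|T(f)\|+T(g)(y_0)$: the bound carries $\|T(f)\|$ where you need $T(f)(y_0)$, and shrinking the bump does not repair this because the hypothesis tells you the \emph{norm} of $T(g+w)$ but nothing about \emph{where} $T(g+w)$ peaks. Your proposed detour through the pointwise identity $T(nf)=nT(f)$ runs into exactly the same localization problem (the paper only ever gets this on $M_{T(f)}$), and even if it were available it is unclear how it would yield $T(f+g)=T(f)+T(g)$ for unrelated $f,g$; the paper does not use it.

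What the paper supplies in place of tall bumps is a \emph{filling construction}: given any $u\in\pcy$ and any $y_0$ with $u(y_0)<\|u\|$, one builds (as an explicit convergent series of Urysohn functions adapted to the level sets of $u$) a nonzero $v$ peaking at $y_0$ with $u+v$ peaking at $y_0$ and $\|u+v\|=\|u\|$. This is the opposite of a tall bump---it is small and tailored to $u$---and it forces the peak of $u+v$ to sit exactly at $y_0$ without changing the norm. From this one proves two conditional-additivity lemmas: (i) if $T(f)$ and $T(g)$ both peak at $y_0$ then so does $T(f+g)$, with value $T(f)(y_0)+T(g)(y_0)$; (ii) if $T(g)$ and $T(f)+T(g)$ both peak at $y_0$ then the same conclusion holds. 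Feeding the filling construction through surjectivity to manufacture auxiliary $T(f_0),T(g_0),T(h)$ with prescribed peak behaviour, and combining (i), (ii) with the shift identity, then gives both $T(f+g)(y_0)\le T(f)(y_0)+T(g)(y_0)$ and its reverse at an arbitrary $y_0$. Your order-theoretic intuition is pointing in the right direction, but the concrete replacement for inverses is this filling lemma rather than squeezing in order intervals.
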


 As a consequence of Theorem~\ref{thm1}, 
 we also obtain the following cororally. 

\begin{cor}\label{cor1}
Let $A^{+}_{i}$ be the positive cone of a unital commutative $C^{*}$-algebra $A_i$ with the unit element $1_{A_{i}}$ for $i=1,2$. 
If  $T:A^{+}_{1}\to A^{+}_{2} $ a bijective mapping satisfying  
\[
\left\|T(a+b)\right\|=\left\|T(a)+T(b)\right\|\qquad (a,b\in A_1^{+}),   
\]
then  
there exists a homeomorphism $\tau:K_2\to K_1$ such that 
\[
\wh{T(a)}(\xi)=\wh{T(1_{A_{1}})}(\xi)\wh{a}(\tau(\xi))\qquad (a\in A_{1}^{+},\:\xi\in K_2), 
\]
where we denote by $K_i$ the maximal ideal space of $A_i$ and $\wh{a}$ the Gelfand transform of $a\in A_{i}$  for $i=1,2$.  
\end{cor}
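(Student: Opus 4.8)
The plan is to reduce the statement to a Banach--Stone-type description of bipositive linear bijections between spaces of continuous functions, feeding in the additivity already supplied by Theorem~\ref{thm1}. Since each $A_i$ is unital and commutative, the Gelfand transform is an isometric $*$-isomorphism of $A_i$ onto $C(K_i)$ (with $K_i$ compact Hausdorff) carrying $A_i^{+}$ onto the positive cone $C^{+}(K_i)=\{f\in C(K_i): f\ge 0\}$ and $1_{A_i}$ onto the constant function $\unit$. After this identification it suffices to show that every bijection $T\colon C^{+}(K_1)\to C^{+}(K_2)$ satisfying the norm equation has the form $T(f)=T(\unit)\cdot(f\circ\tau)$ for a homeomorphism $\tau\colon K_2\to K_1$. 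By Theorem~\ref{thm1}, $T$ is additive and positive homogeneous, and since $T$ is bijective its inverse $S=T^{-1}$ inherits both properties.

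First I would promote $T$ to a real-linear map on the self-adjoint parts. Writing $f\in C(K_1,\R)$ as $f=f^{+}-f^{-}$ with $f^{\pm}\in C^{+}(K_1)$, set
\[
\widetilde T(f)=T(f^{+})-T(f^{-}).
\]
Additivity of $T$ makes this independent of the positive decomposition chosen, and one checks routinely that $\widetilde T\colon C(K_1,\R)\to C(K_2,\R)$ is real-linear; the same construction applied to $S$ yields a real-linear inverse, so $\widetilde T$ is a real-linear bijection. By construction $\widetilde T$ maps $C^{+}(K_1)$ onto $C^{+}(K_2)$, whence both $\widetilde T$ and $\widetilde T^{-1}$ are positive; that is, $\widetilde T$ is an order isomorphism of $C(K_1,\R)$ onto $C(K_2,\R)$ (and automatically bounded, being positive between Banach lattices).

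Next I would upgrade ``order isomorphism'' to ``lattice isomorphism.'' Because the lattice operations of $C(K,\R)$ are determined by the order alone ($f\vee g$ being the least upper bound), an order isomorphism necessarily satisfies $\widetilde T(f\vee g)=\widetilde T f\vee\widetilde T g$ and the dual identity for $\wedge$. The classical Banach--Stone theorem for vector lattices then applies: a linear lattice isomorphism between $C(K_1,\R)$ and $C(K_2,\R)$ has the form
\[
\widetilde T(f)(\xi)=h(\xi)\,f(\tau(\xi))\qquad(\xi\in K_2),
\]
where $\tau\colon K_2\to K_1$ is a homeomorphism and $h\in C(K_2)$ is strictly positive. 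Strict positivity of $h$ is forced by surjectivity (if $h(\xi_0)=0$ then every $\widetilde T f$ would vanish at $\xi_0$), and evaluating the displayed formula at $f=\unit$ identifies the weight as $h=\widetilde T(\unit)=\widehat{T(1_{A_1})}$.

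Finally, specializing $f$ to $\widehat a$ for $a\in A_1^{+}$ and transporting back through the Gelfand transform yields
\[
\widehat{T(a)}(\xi)=\widehat{T(1_{A_1})}(\xi)\,\widehat a(\tau(\xi))\qquad(a\in A_1^{+},\ \xi\in K_2),
\]
which is the asserted identity. I expect the genuine work to lie in the order-isomorphism-to-weighted-composition step: one must construct $\tau$ from the induced bijection between the extreme rays of the cones---equivalently, the characters/point evaluations of $K_1$ and $K_2$---and verify that $\tau$ and its inverse are continuous, so that $\tau$ is a homeomorphism in the stated direction $K_2\to K_1$, while confirming that the weight $h$ is continuous and nonvanishing. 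If the machinery developed for Theorem~\ref{thm1} already encodes this correspondence at the level of the Choquet boundary, it can be reused; otherwise the standard Banach--Stone argument for lattice isomorphisms supplies it.
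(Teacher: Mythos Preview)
Your argument is correct and follows a route parallel to, but distinct from, the paper's. The paper first proves separately (Lemma~\ref{nn3}) that $\wh{T(1_{A_1})}$ is invertible, divides $\wT$ by it to obtain a unital positive map $S$, then extends $S$ complex-linearly to all of $C(K_1)$ via the orthogonal decomposition $\wh a=(\wh{b_1}-\wh{c_1})+i(\wh{b_2}-\wh{c_2})$ (Lemma~\ref{dec}), and invokes Kadison's result \cite[Corollary~5]{RK} that a unital linear order isomorphism between $C^*$-algebras is a $C^*$-isomorphism, before finally applying Banach--Stone. You instead extend $T$ itself, without normalizing, only to the real self-adjoint part $C(K_1,\R)$, observe that a linear order isomorphism is automatically a lattice isomorphism, and appeal directly to the lattice-theoretic Banach--Stone theorem to obtain the weighted-composition form, reading off the weight $h=\wh{T(1_{A_1})}$ and its strict positivity a posteriori from surjectivity. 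Your path is more economical---it stays real-valued throughout and avoids the preliminary invertibility lemma---while the paper's route delivers the stronger intermediate conclusion that the normalized map extends to a $*$-isomorphism of the full complex algebras.
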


\section{Preliminaries}\label{2}
Let  
$C_0(L)$ the commutative $C^{*}$-algebra
of all complex-valued continuous functions on a locally compact Hausdorff space $L$ vanishing at infinity 
equipped with the supremum norm 
 $\sn{f}=\sup_{x\in L}|f(x)|$ 
for $\nobreak{f\in C_0(L)}$.  
The positive cone, $C_0^{+}(L)$, is the set of all non-negative functions of $C_0(L)$, that is, 
\[
C_0^{+}(L)=\set{h\in C_0(L),\::\: h\geq 0}. 
\]
From now until section~\ref{sec4}, 
we 
assume that 
$T:\pcx\to \pcy$ is a surjective mapping satisfying  
 \begin{equation}\label{n-a}
 \left\|T\left(f+g\right)\right\|=\left\|T(f)+T(g)\right\|\qquad (f,g\in \pcx). 
 \end{equation}
 In addition, we denote by \( \mathbb{N} \) the set of all natural numbers,  
by \( \mathbb{R} \) the field of real numbers,  
and by \( \mathbb{C} \) the field of complex numbers.
 First, it is worth noting that the following can be derived from \eqref{n-a}. 

\begin{rem}\label{r1}
Let $f, g, h\in \pcx$. 
 We apply \eqref{n-a} to the map $T:\pcx\to \pcy$ repeatedly, and then, 
 it follows that 
\[
\left\|T(f+g)+T(h)\right\|=\left\|T\left((f+g)+h\right)\right\|=\left\|T\left(f+(g+h)\right)\right\|=\left\|T(f)+T(g+h)\right\|.  
\]
Hence, we obtain $\left\|T(f+g)+T(h)\right\|=\left\|T(f)+T(g+h)\right\|$ for all $f,g,h\in \pcx$. 
\end{rem}

Next, we prove that  $T(nf)$ attains its maximum at the same point in $L_2$ as $T(f)$ for $f\in \pcx$ and $n\in \N$. 
We prepare the following two lemmas 
in order to prove that the map $T$, which satisfies \eqref{n-a}, preserves scalar mutiplication 
for any $n\in \N$ with respect to the norm. 
Before proving them, we prepare some symbols as below.
\begin{nota}
For each $y_0\in L_2$, we define a subset $\pp{y_0}$ of $\pcy$ by 
\begin{equation*}
\pp{y_0}=\set{h\in \pcy\: : \: h(y_0)=\Vinf{h}}. 
\end{equation*}
For each $h\in \pcy$, we denote by
\[
M_{h}=\set{y\in L_2\: : \:h(y)=\Vinf{h} }. 
\]
Note that $M_{h}$ is a compact subset of $L_2$ if $\|h\|>0$. 
\end{nota}

\begin{lem}\label{b-1}
Let $f\in\pcx$.  If $y_0\notin M_{T(f)}$, 
then there exists $g\in \pcx$ such that  $T(g)\in\nobreak \pp{y_0}$ 
 and 
$
\Vinf{T(f)+T(g)}<\Vinf{T(f)}+\Vinf{T(g)}.
$
\end{lem}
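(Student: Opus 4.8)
The plan is to reduce the statement to a pure construction problem inside $\pcy$ by exploiting the surjectivity of $T$. Since $T$ maps onto $\pcy$, it suffices to produce a single function $w\in\pcy$ with $w(y_0)=\Vinf{w}$ (so that $w\in\pp{y_0}$) and with $\Vinf{T(f)+w}<\Vinf{T(f)}+\Vinf{w}$; any $g\in\pcx$ with $T(g)=w$ will then satisfy both conclusions. First I would observe that the hypothesis $y_0\notin M_{T(f)}$ forces $\Vinf{T(f)}>0$: otherwise $T(f)\equiv 0$, whence $M_{T(f)}=L_2\ni y_0$, a contradiction. Writing $c=\Vinf{T(f)}>0$, the set $M_{T(f)}$ is a nonempty compact subset of $L_2$ not containing $y_0$.

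Next I would separate $y_0$ from $M_{T(f)}$. Using that $L_2$ is locally compact Hausdorff and that $M_{T(f)}$ is compact, I would choose an open neighbourhood $V$ of $y_0$ with compact closure $\overline{V}$ satisfying $\overline{V}\cap M_{T(f)}=\emptyset$. Since $T(f)$ is continuous and attains its maximum $c$ only on $M_{T(f)}$, compactness of $\overline{V}$ yields $c':=\sup_{y\in\overline{V}}T(f)(y)<c$. By the locally compact form of Urysohn's lemma I would then select $w\in\pcy$ with compact support contained in $V$ and with $w(y_0)=\Vinf{w}=1$; in particular $w\in\pp{y_0}$, and $w$ vanishes off $V$.

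It remains to verify the strict inequality. Splitting the supremum into the two regions where either $w$ or $T(f)$ is controlled: on $L_2\setminus V$ one has $w=0$, so $T(f)+w=T(f)\le c$; on $V\subseteq\overline{V}$ one has $T(f)\le c'$ and $w\le 1$, so $T(f)+w\le c'+1$. Hence $\Vinf{T(f)+w}\le\max\{c,\,c'+1\}<c+1=\Vinf{T(f)}+\Vinf{w}$, where strictness uses $c'<c$. Finally, surjectivity of $T$ provides $g\in\pcx$ with $T(g)=w$, completing the argument.

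The heart of the matter, and the only genuinely delicate point, is the passage from \emph{$y_0$ is not a maximiser of $T(f)$} to a strict defect in the triangle inequality. This is precisely where compactness of $M_{T(f)}$ is essential: it allows the maximiser set to be trapped inside $L_2\setminus\overline{V}$, producing the uniform gap $c'<c$ on $\overline{V}$, which in turn pushes $\Vinf{T(f)+w}$ strictly below $c+1$. Thus the observation $\Vinf{T(f)}>0$, which guarantees compactness of $M_{T(f)}$, is exactly what makes the separation, and hence the whole construction, go through.
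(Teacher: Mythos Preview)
Your proof is correct and follows essentially the same strategy as the paper: use compactness of $M_{T(f)}$ together with Urysohn's lemma to build a function in $\pcy$ peaking at $y_0$, then pull it back via surjectivity of $T$. The only cosmetic difference is that the paper constructs its Urysohn function to vanish on $M_{T(f)}$ (rather than to be supported near $y_0$), which lets the pointwise strict inequality be read off directly without passing through the auxiliary uniform gap $c'<c$.
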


\begin{proof}
Because $\{y_0\}\cap M_{T(f)}$ is empty and $M_{T(f)}$ is compact in $L_2$, 
we can choose $h\in \pcy$ such that 
$
h(y_0)=1=\Vinf{h}
$
 and 
 $
 h=0
 $ on $M_{T(f)}$ by Urysohn's lemma.  
 There exists $g\in \pcx$ such that $T(g)=h$, 
since $T:\nobreak\pcx\to \pcy$ is surjective. 
It follows from $T(g)=h$ with the choice of $h$ that $T(g)(y_0)=1=\sn{T(g)}$. 
The remaining thing we need to prove is 
 that $T(f)(y)+T(g)(y)<\Vinf{T(f)}+\Vinf{T(g)}$ for all $y\in L_2$. 

If $y\in M_{T(f)}$, then 
$$
T(f)(y)+T(g)(y)=T(f)(y)+h(y)=T(f)(y)+0<\Vinf{T(f)}+\Vinf{T(g)}, 
$$
and thus, $T(f)(y)+T(g)(y)<\sn{T(f)}+\sn{T(g)}$. 
Next, we assume that $y\in L_2\setminus M_{T(f)}$.  
 It follows from $T(f)(y)<\Vinf{T(f)}$ that  $T(f)(y)+T(g)(y)<\Vinf{T(f)}+\Vinf{T(g)}$. 
Therefore, we conclude that 
$
T(f)(y)+T(g)(y)<\sn{T(f)}+\sn{T(g)}
$ 
for all $y\in L_2$. 
\end{proof}

\begin{lem}\label{2-3-4}
Let \( h_1, h_2 \in \pcy \). Then the following statements are equivalent\:: 
\begin{enumerate}[\rm{(i)}]
\item[\rm{(i)}] \( \|h_1\| + \|h_2\| = \|h_1 + h_2\| \);
\item[\rm{(ii)}] \( M_{h_1} \cap M_{h_2} \text{ is non-empty}\).
\end{enumerate}  
\end{lem}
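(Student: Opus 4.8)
The plan is to prove the two implications separately, using throughout the triangle inequality $\|h_1 + h_2\| \le \|h_1\| + \|h_2\|$, which is valid for all $h_1, h_2 \in \pcy$ and already supplies one half of the relevant norm equality. The whole argument reduces to the fact that a function vanishing at infinity attains its supremum on a compact level set.

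First I would dispose of the degenerate case. If $\|h_1\| = 0$ then, since $h_1 \ge 0$, we have $h_1 \equiv 0$, so $M_{h_1} = L_2$ and both (i) and (ii) hold trivially (and symmetrically if $\|h_2\| = 0$). Hence I may assume $\|h_1\| > 0$ and $\|h_2\| > 0$. In that case, because every function in $\pcy$ vanishes at infinity, each of $h_1$, $h_2$, and $h_1 + h_2$ attains its supremum, so $M_{h_1}$, $M_{h_2}$, and $M_{h_1 + h_2}$ are non-empty compact subsets of $L_2$.

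For the implication (ii) $\Rightarrow$ (i), I would simply evaluate at a common maximizer: choosing $y_0 \in M_{h_1} \cap M_{h_2}$ gives $(h_1 + h_2)(y_0) = \|h_1\| + \|h_2\|$, and since $(h_1 + h_2)(y_0) \le \|h_1 + h_2\|$, this combines with the triangle inequality to force $\|h_1 + h_2\| = \|h_1\| + \|h_2\|$. For the converse (i) $\Rightarrow$ (ii), I would pick any $y^* \in M_{h_1 + h_2}$; the hypothesis then yields $h_1(y^*) + h_2(y^*) = \|h_1 + h_2\| = \|h_1\| + \|h_2\|$. Since $h_1(y^*) \le \|h_1\|$ and $h_2(y^*) \le \|h_2\|$ hold separately, equality of the two sides forces $h_1(y^*) = \|h_1\|$ and $h_2(y^*) = \|h_2\|$, that is, $y^* \in M_{h_1} \cap M_{h_2}$, which is therefore non-empty.

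I do not expect any serious obstacle: the entire argument rests on the single structural fact that a $C_0$-function attains its norm on a compact set, which guarantees that the maximizers used in both directions actually exist. The only point demanding a little care is the bookkeeping of the degenerate case $\|h_i\| = 0$, where $M_{h_i}$ is the whole (possibly non-compact) space rather than a compact level set; treating it first keeps the main argument clean.
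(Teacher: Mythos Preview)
Your proposal is correct and follows essentially the same approach as the paper's proof: both directions are established by picking a maximizer of the appropriate function and squeezing via the triangle inequality together with the pointwise bounds $h_i(y) \le \|h_i\|$. Your explicit handling of the degenerate case $\|h_i\| = 0$ is slightly more careful than the paper, which leaves this implicit, but the core argument is identical.
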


\begin{proof}
Assume that \rm{(i)} holds. Then there exists \( y_0 \in L_2 \) such that
\[
\|h_1 + h_2\| = h_1(y_0) + h_2(y_0).
\]
By assumption \rm{(i)}, we have
\[
\|h_1\| + \|h_2\| = h_1(y_0) + h_2(y_0).
\]
Since \( h_2(y_0) \leq \|h_2\| \), it follows that
\[
\|h_1\| + \|h_2\| = h_1(y_0) + h_2(y_0) \leq h_1(y_0) + \|h_2\| \leq \|h_1\| + \|h_2\|,
\]
and hence all inequalities must be equalities. This implies that \( h_1(y_0) = \|h_1\| \) and \( h_2(y_0) = \|h_2\| \), that is, \( y_0 \in M_{h_1} \cap M_{h_2} \). Therefore, \rm{(ii)} holds.

Conversely, assume that \rm{(ii)} holds. Then there exists \( y_0 \in M_{h_1} \cap M_{h_2} \) such that
\[
h_1(y_0) = \|h_1\| \quad \text{and} \quad h_2(y_0) = \|h_2\|.
\]
Applying the triangle inequality yields
\[
\|h_1 + h_2\| \leq \|h_1\| + \|h_2\| = h_1(y_0) + h_2(y_0) \leq \|h_1 + h_2\|.
\]
Thus, all inequalities are equalities, and we conclude that
\[
\|h_1\| + \|h_2\| = \|h_1 + h_2\|,
\]
which proves \rm{(i)}. The proof is complete.
\end{proof}

The next lemma plays an important role in proving that \( M_{T(nf)} \subset M_{T(f)} \) for \( n \in \mathbb{N} \) and \( f \in \pcx \).

\begin{lem}\label{3-1}
Let \( f, g \in \pcx \). If \( M_{T(f)} \cap M_{T(g)} \) is non-empty, then 
\[
M_{T(f+g)} \subset M_{T(f)}.
\]
\end{lem}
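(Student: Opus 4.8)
The plan is to argue by contradiction: suppose there were a point $y_0 \in M_{T(f+g)}$ with $y_0 \notin M_{T(f)}$, and derive a contradiction with the strict inequality furnished by Lemma~\ref{b-1}. Before that, I would record the global norm identity forced by the hypothesis. Since $M_{T(f)} \cap M_{T(g)}$ is non-empty, Lemma~\ref{2-3-4} gives $\|T(f)\| + \|T(g)\| = \|T(f) + T(g)\|$, and then \eqref{n-a} upgrades this to
\[
\|T(f+g)\| = \|T(f)\| + \|T(g)\|.
\]
Next, because $y_0 \notin M_{T(f)}$, Lemma~\ref{b-1} produces an element $w \in \pcx$ (denoted $g$ there, renamed to avoid a clash) with $T(w)(y_0) = \|T(w)\|$ and $\|T(f) + T(w)\| < \|T(f)\| + \|T(w)\|$.

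The heart of the argument is to evaluate $\|T(f+g) + T(w)\|$ in two ways. On one hand, $y_0$ is a common maximizer of $T(f+g)$ and $T(w)$, so evaluating the sum at $y_0$ gives the lower bound $\|T(f)\| + \|T(g)\| + \|T(w)\|$ (using the displayed identity for $\|T(f+g)\|$), while the triangle inequality gives the matching upper bound; hence
\[
\|T(f+g) + T(w)\| = \|T(f)\| + \|T(g)\| + \|T(w)\|.
\]
On the other hand, applying Remark~\ref{r1} to the two groupings $(f+g)+w$ and $(f+w)+g$ shows $\|T(f+g) + T(w)\| = \|T(f+w) + T(g)\|$, since both equal $\|T(f) + T(g+w)\|$. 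Stripping off $T(g)$ by the triangle inequality then yields $\|T(f+w)\| \ge \|T(f)\| + \|T(w)\|$. But \eqref{n-a} gives $\|T(f+w)\| = \|T(f) + T(w)\|$, which by the choice of $w$ is strictly smaller than $\|T(f)\| + \|T(w)\|$; this is the contradiction, so no such $y_0$ exists and $M_{T(f+g)} \subset M_{T(f)}$.

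I expect the one genuinely non-obvious step to be the rebracketing in the second evaluation. Since $T$ is not yet known to be additive, one cannot manipulate $T(f+g)$ algebraically, and Remark~\ref{r1} is precisely the surrogate that transfers information about the maximizer $y_0$ of $T(f+g)$ into a statement about $T(f+w)$—which by the construction of $w$ cannot have its norm inflated near $M_{T(f)}$. Recognizing that the right element to add is $T(w)$ (forced to peak at $y_0$) and that the decisive comparison is with $T(f+w)$ rather than with $T(f)$ directly is the crux; the surrounding inequalities are routine applications of the triangle inequality together with \eqref{n-a}.
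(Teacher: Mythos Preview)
Your proof is correct and follows essentially the same route as the paper's: both argue by contradiction, invoke Lemma~\ref{b-1} to produce the auxiliary element (your $w$, the paper's $g_0$), use the hypothesis together with Lemma~\ref{2-3-4} and \eqref{n-a} to compute $\|T(f+g)+T(w)\|=\|T(f)\|+\|T(g)\|+\|T(w)\|$, and then rebracket via Remark~\ref{r1} to reach $\|T(f+w)+T(g)\|$, where stripping off $T(g)$ contradicts the strict inequality for $\|T(f)+T(w)\|$. The only cosmetic difference is that the paper applies Remark~\ref{r1} once (after writing $T(f+g)=T(g+f)$), whereas you route through the common value $\|T(f)+T(g+w)\|$; the logic is identical.
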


\begin{proof}
Suppose, for contradiction, that there exists \( y_0 \in M_{T(f+g)} \setminus M_{T(f)} \). 
Since \( y_0 \notin M_{T(f)} \), we can choose \( g_0 \in \pcx \) such that 
\( T(g_0) \in \pp{y_0} \) and 
\begin{equation}\label{2-3-1}
\|T(f) + T(g_0)\| < \|T(f)\| + \|T(g_0)\| 
\end{equation}
by Lemma~\ref{b-1}.
On the other hand, since \( y_0 \in M_{T(f+g)} \cap M_{T(g_0)} \), 
it follows from Lemma~\ref{2-3-4} that
\begin{equation}\label{2-3-2}
\|T(f+g) + T(g_0)\| = \|T(f+g)\| + \|T(g_0)\|.
\end{equation}
Given that \( M_{T(f)} \cap M_{T(g)}\) is non-empty, we deduce from 
Lemma~\ref{2-3-4} with the equation~\eqref{n-a} that
\[
\|T(f+g)\| = \|T(f) + T(g)\| = \|T(f)\| + \|T(g)\|.
\]
Combining the last equality with \eqref{2-3-2}, we obtain
\begin{equation*}
\|T(f)\| + \|T(g)\| + \|T(g_0)\| = \|T(f+g) + T(g_0)\|.
\end{equation*}
We deduce from Remark~\ref{r1} that
\begin{equation*}
\sn{T(f+g)+T(g_0)}=\sn{T(g+f)+T(g_0)}=\sn{T(g)+T(f+g_0)}. 
\end{equation*}
Hence, we obtain 
\begin{equation}\label{k-1}
\|T(f)\| + \|T(g)\| + \|T(g_0)\| = \|T(g) + T(f + g_0)\|.
\end{equation}
Note that $\sn{T(f+g_0)}=\sn{T(f)+T(g_0)}$ by \eqref{n-a}. 
Using inequality~\eqref{2-3-1}, we have
\[
\|T(g) + T(f + g_0)\| \leq \|T(g)\| + \|T(f + g_0)\| < \|T(g)\| + \|T(f)\| + \|T(g_0)\|.
\]
This contradicts the equality~\eqref{k-1}. 
Therefore, 
we must have $M_{T(f+g)}\subset M_{T(f)}$. 
\end{proof}

\begin{lem}\label{3-2}
For each $n\in \mathbb{N}$ and $f\in\pcx$, the inclusion 
$
M_{T(nf)}\subset M_{T(f)}
$
 holds. 
\end{lem}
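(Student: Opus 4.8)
The plan is to argue by induction on $n \in \N$, with the inductive step reduced entirely to a single application of Lemma~\ref{3-1}. For $n=1$ the inclusion $M_{T(1\cdot f)}=M_{T(f)}\subset M_{T(f)}$ is trivial, so it suffices to show that, assuming $M_{T(nf)}\subset M_{T(f)}$, we also have $M_{T((n+1)f)}\subset M_{T(f)}$. The decomposition I would exploit is
\[
(n+1)f = f + nf,
\]
which puts us exactly in the position to invoke Lemma~\ref{3-1} with its first argument taken to be $f$ and its second argument taken to be $nf$: that lemma yields $M_{T(f+nf)}\subset M_{T(f)}$, i.e.\ $M_{T((n+1)f)}\subset M_{T(f)}$, \emph{provided} its hypothesis $M_{T(f)}\cap M_{T(nf)}\neq\varnothing$ is met.

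The crux is therefore to verify that $M_{T(f)}\cap M_{T(nf)}$ is non-empty, and this is where the inductive hypothesis does its work. By the inductive hypothesis $M_{T(nf)}\subset M_{T(f)}$, so that $M_{T(f)}\cap M_{T(nf)} = M_{T(nf)}$; it then remains only to note that $M_{T(nf)}$ itself is non-empty. This is immediate: $T(nf)$ is a non-negative function in $C_0(L_2)$, hence either $T(nf)=0$, in which case $M_{T(nf)}=L_2$, or $\|T(nf)\|>0$, in which case $T(nf)$ attains its supremum because it vanishes at infinity, so $M_{T(nf)}$ is a non-empty (compact) set. Either way $M_{T(f)}\cap M_{T(nf)}=M_{T(nf)}\neq\varnothing$, and Lemma~\ref{3-1} applies.

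I would also dispose of the degenerate possibility separately at the start of the inductive step: if $\|T(f)\|=0$, then $T(f)=0$ and $M_{T(f)}=L_2$, so the inclusion $M_{T((n+1)f)}\subset M_{T(f)}$ holds automatically and there is nothing to prove. This case must be isolated precisely because Lemma~\ref{3-1} (through its reliance on Lemma~\ref{b-1} and the compactness of $M_{T(f)}$ guaranteed only when $\|T(f)\|>0$) is tailored to the situation $\|T(f)\|>0$. With this case removed, the argument above runs without obstruction.

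The proof is thus essentially mechanical once the right decomposition and the reduction to Lemma~\ref{3-1} are in place; the only genuine content is checking the non-emptiness of the intersection, which I expect to be the main (and only mildly delicate) point, together with the bookkeeping needed to rule out the trivial cases $\|T(f)\|=0$ (and, subsumed within it, $f=0$). I do not anticipate any deeper difficulty, since all the structural work has already been carried out in Lemmas~\ref{b-1}, \ref{2-3-4}, and \ref{3-1}.
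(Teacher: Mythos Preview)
Your proposal is correct and follows essentially the same inductive argument as the paper's proof: the paper also reduces the inductive step to a single application of Lemma~\ref{3-1}, using the inductive hypothesis to guarantee $M_{T(kf)}\cap M_{T(f)}=M_{T(kf)}$ is non-empty. The only cosmetic difference is that the paper applies Lemma~\ref{3-1} with the pair $(kf,f)$ (obtaining the slightly sharper intermediate inclusion $M_{T((k+1)f)}\subset M_{T(kf)}\subset M_{T(f)}$), whereas you apply it with $(f,nf)$; your additional discussion of the degenerate case $\|T(f)\|=0$ and the non-emptiness of $M_{T(nf)}$ is sound but in fact unnecessary, since in that case $M_{T(f)}=L_2$ and Lemma~\ref{3-1} holds vacuously.
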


\begin{proof}
Let \( f \in \pcx \) and fix it. 
We prove by induction on \( n \in \mathbb{N} \) that \( M_{T(nf)} \subset M_{T(f)} \) holds for all \( n \in \mathbb{N} \).
For the case \( n = 1 \), the statement is trivial, 
 since \( M_{T(nf)} = M_{T(f)} \).
 
Assume, as the induction hypothesis, that \( M_{T(kf)} \subset M_{T(f)} \) holds for some \( k \in \mathbb{N} \). 
Since \( M_{T(kf)} \cap M_{T(f)} = M_{T(kf)} \), this intersection is non-empty. 
Therefore, by Lemma~\ref{3-1}, we have
\[
M_{T((k+1)f)} = M_{T(kf + f)} \subset M_{T(kf)} \subset M_{T(f)}.
\]
This completes the induction step. 
Hence, \( M_{T(nf)} \subset M_{T(f)} \) holds for all \( n \in \mathbb{N} \), as desired.
\end{proof}

We are now ready to prove that the map \( T \), which satisfies \eqref{n-a}, 
preserves scalar multiplication by natural numbers with respect to the norm.

\begin{lem}\label{3-3}
For every \( n \in \mathbb{N} \) and \( f \in \pcx \), we have
\[
\|T(nf)\| = n\|T(f)\|.
\]
\end{lem}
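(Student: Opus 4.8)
The plan is to argue by induction on \( n \), with the base case \( n = 1 \) being the trivial identity \( \|T(f)\| = \|T(f)\| \). The engine of the inductive step is to rewrite \( \|T((k+1)f)\| \) in additive form using \eqref{n-a}, and then to show that the two summands attain their norms at a common point of \( L_2 \), so that the norm of the sum splits as the sum of the norms.

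Concretely, assuming the induction hypothesis \( \|T(kf)\| = k\|T(f)\| \), I would first apply \eqref{n-a} to the decomposition \( (k+1)f = kf + f \) to obtain
\[
\|T((k+1)f)\| = \|T(kf + f)\| = \|T(kf) + T(f)\|.
\]
The crux is then to establish \( \|T(kf) + T(f)\| = \|T(kf)\| + \|T(f)\| \). By Lemma~\ref{2-3-4}, this equality is equivalent to \( M_{T(kf)} \cap M_{T(f)} \neq \emptyset \). But Lemma~\ref{3-2} gives precisely \( M_{T(kf)} \subset M_{T(f)} \), whence \( M_{T(kf)} \cap M_{T(f)} = M_{T(kf)} \). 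Provided this set is non-empty, Lemma~\ref{2-3-4} yields
\[
\|T((k+1)f)\| = \|T(kf)\| + \|T(f)\| = k\|T(f)\| + \|T(f)\| = (k+1)\|T(f)\|,
\]
which closes the induction.

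The single point requiring care — and what I expect to be the main, if minor, obstacle — is the non-emptiness of \( M_{T(kf)} \), since the notation only guarantees that \( M_h \) is compact and non-empty when \( \|h\| > 0 \). I would therefore split into two cases. If \( \|T(f)\| > 0 \), then by the induction hypothesis \( \|T(kf)\| = k\|T(f)\| > 0 \), so \( M_{T(kf)} \) is a non-empty compact set and the argument above applies verbatim. If instead \( \|T(f)\| = 0 \), then \( T(f) = 0 \), and the induction hypothesis forces \( \|T(kf)\| = 0 \), hence \( T(kf) = 0 \); in this case \( \|T(kf) + T(f)\| = 0 = (k+1)\|T(f)\| \) directly, without invoking the maximizing-set machinery. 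In either case the inductive step goes through, and the lemma follows for all \( n \in \mathbb{N} \) and \( f \in \pcx \).
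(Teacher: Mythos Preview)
Your proposal is correct and follows essentially the same route as the paper: induction on \(n\), apply \eqref{n-a} to \((k+1)f = kf + f\), invoke Lemma~\ref{3-2} to get \(M_{T(kf)} \subset M_{T(f)}\), and then Lemma~\ref{2-3-4} to split the norm. Your added case analysis for \(\|T(f)\| = 0\) is more careful than the paper's version, but in fact \(M_h\) is always non-empty for \(h \in \pcy\) (when \(h = \zlb\) one has \(M_h = L_2\), and when \(\|h\| > 0\) the supremum is attained by compactness of \(\{y : h(y) \geq \|h\|/2\}\)), so the paper's implicit assumption is justified.
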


\begin{proof}
Let \( f \in \pcx \) and fix it. 
We prove by induction on \( n \in \mathbb{N} \) that \( \|T(nf)\| = n\|T(f)\| \) holds.
For the case \( n = 1 \), the statement is trivial.

Assume, as the induction hypothesis, that \( \|T(kf)\| = k\|T(f)\| \) holds for some \( k \in \mathbb{N} \). 
By the equation~\eqref{n-a}, we have
\[
\|T((k+1)f)\| = \|T(kf + f)\| = \|T(kf) + T(f)\|.
\]
Since \( M_{T(kf)} \subset M_{T(f)} \) by Lemma~\ref{3-2}, their intersection is non-empty. 
Thus, by Lemma~\ref{2-3-4}, we obtain
\[
\|T(kf) + T(f)\| = \|T(kf)\| + \|T(f)\|.
\]
Substituting the induction hypothesis into this equality yields
\[
\|T((k+1)f)\| = \|T(kf)\| + \|T(f)\| = k\|T(f)\| + \|T(f)\| = (k+1)\|T(f)\|.
\]
This completes the induction step. 
Hence, \( \|T(nf)\| = n\|T(f)\| \) holds for all \( n \in \mathbb{N} \).
\end{proof}

Next, 
 we crarify the relationship among $M_{T(f)}$, $M_{T(g)}$, and $M_{T(f+g)}$ 
 when $M_{T(f)}\cap M_{T(g)}$ is non-empty for $f,g\in \pcx$. 
To this end, we make good use of the following lemma.

\begin{lem}\label{2-3-5}
Let $y\in L_2$ and $K\subset L_2$ a compact subset. 
If $K\cap V$ is non-empty for any open neighborhood $V$ of $y$, 
then $y\in K$.  
\end{lem}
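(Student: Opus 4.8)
The plan is to recognize the hypothesis as the assertion that $y$ lies in the closure of $K$, and then to invoke the standard fact that a compact subset of a Hausdorff space is closed; since $L_2$ is locally compact Hausdorff, this gives $y\in K$ at once. Concretely, the condition that $K\cap V$ is non-empty for every open neighborhood $V$ of $y$ is, by definition, equivalent to $y\in\overline{K}$. Hence it suffices to show $\overline{K}=K$, i.e. that the compact set $K$ is closed, and then to conclude $y\in\overline{K}=K$.

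To prove that $K$ is closed, I would show directly that $L_2\setminus K$ is open. Fix $z\in L_2\setminus K$. For each $x\in K$, the Hausdorff property of $L_2$ yields disjoint open sets $U_x\ni x$ and $W_x\ni z$. The family $\{U_x\}_{x\in K}$ is then an open cover of the compact set $K$, so finitely many of them, say $U_{x_1},\dots,U_{x_m}$, already cover $K$. The set $W:=\bigcap_{j=1}^m W_{x_j}$ is an open neighborhood of $z$ that is disjoint from $\bigcup_{j=1}^m U_{x_j}\supset K$, so $W\subset L_2\setminus K$. This proves $L_2\setminus K$ is open, whence $K$ is closed and $\overline{K}=K$, completing the argument.

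Alternatively, I would avoid the closure language and argue by direct contraposition: assuming $y\notin K$, the same separation-and-finite-subcover argument applied with $z=y$ produces an open neighborhood $V$ of $y$ with $V\cap K=\emptyset$, contradicting the hypothesis; therefore $y\in K$. This second route is perhaps the cleaner one to write up, since it uses the hypothesis in exactly the form stated and does not require naming the closure.

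There is no genuine obstacle here, as the result is a routine point-set topology fact. The only point deserving care is the use of the Hausdorff property of $L_2$: it is precisely what upgrades compactness to closedness and thereby forces the neighborhood condition to entail membership in $K$. Since throughout the paper $L_2$ is a locally compact Hausdorff space, this hypothesis is available and no further assumptions are needed.
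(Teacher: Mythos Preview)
Your proposal is correct and, in its contrapositive form, is essentially identical to the paper's proof: the paper assumes $y\notin K$, notes that $L_2\setminus K$ is open (using that $K$ is compact in the Hausdorff space $L_2$), and takes $V_0=L_2\setminus K$ as the separating neighborhood. You simply spell out the standard separation-and-finite-subcover argument for why compact subsets of Hausdorff spaces are closed, whereas the paper invokes this implicitly.
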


\begin{proof}
We suppose that $y\notin K$, and then, 
$y\in L_2\setminus K$. 
Since $L_2$ is a locally compact Hausdorff space, 
there exists an open neighborhood $V_0$ of $y$ such that $y\in V_0\subset L_2\setminus K$. 
This shows that $V_0\cap K$ is empty for some open neighborhood $V_0$ of $y$.  
Therefore, 
we conclude that Lemma~\ref{2-3-5} holds by the contrapositive. 
\end{proof}

 The following lemma establishes that for any 
 $f\in \pcx$ and $n\in \N$ with $n\geq 2$,  the function $T(nf)$ attains its maximum at the same point as $T(f)$. 
 We denote by $\zlb$ the zero element in $\pcy$, 
  and 
  this notation will be consistently used throughout the rest of this paper.

\begin{lem}\label{2-3-7}
For each \( f \in \pcx \) and each \( n \in \mathbb{N} \) with \( n \geq 2 \), we have
\[
M_{T(nf)} = M_{T(f)}.
\]
\end{lem}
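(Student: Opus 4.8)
The plan is to establish the reverse inclusion $M_{T(f)}\subset M_{T(nf)}$, since the inclusion $M_{T(nf)}\subset M_{T(f)}$ is already provided by Lemma~\ref{3-2}. If $\|T(f)\|=0$, then $T(f)=\zlb$ and Lemma~\ref{3-3} gives $\|T(nf)\|=n\|T(f)\|=0$, so that $M_{T(nf)}=L_2=M_{T(f)}$ and there is nothing to prove; hence I may assume throughout that $\|T(f)\|>0$.

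Arguing by contradiction, suppose there is a point $y_0\in M_{T(f)}\setminus M_{T(nf)}$. Applying Lemma~\ref{b-1} to $nf$ in place of $f$ yields some $g\in\pcx$ with $T(g)\in\pp{y_0}$ and
\[
\|T(nf)+T(g)\|<\|T(nf)\|+\|T(g)\|=n\|T(f)\|+\|T(g)\|,
\]
where the final equality uses Lemma~\ref{3-3}. Since $T(g)$ peaks at $y_0$, we have $y_0\in M_{T(f)}\cap M_{T(g)}$. The crux of the argument is to show that, in spite of this strict inequality, the quantity $\|T(nf)+T(g)\|$, which equals $\|T(nf+g)\|$ by \eqref{n-a}, is in fact forced to equal $n\|T(f)\|+\|T(g)\|$, producing the desired contradiction.

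To this end I would prove by induction on $k$ the combined statement
\[
\|T(kf+g)\|=k\|T(f)\|+\|T(g)\| \quad\text{and}\quad M_{T(kf+g)}\cap M_{T(f)}\neq\emptyset .
\]
The base case $k=0$ is immediate, as $y_0\in M_{T(g)}\cap M_{T(f)}$ and the norm identity is trivial. For the inductive step, the intersection hypothesis lets me invoke Lemma~\ref{2-3-4} to obtain $\|T(kf+g)+T(f)\|=\|T(kf+g)\|+\|T(f)\|$, and then \eqref{n-a} rewrites the left-hand side as $\|T((k+1)f+g)\|$, giving the norm identity for $k+1$. To propagate the intersection condition I would apply Lemma~\ref{3-1} to the pair $f$ and $kf+g$: the nonempty intersection $M_{T(f)}\cap M_{T(kf+g)}$ forces $M_{T((k+1)f+g)}\subset M_{T(f)}$, and this set is nonempty because its norm $(k+1)\|T(f)\|+\|T(g)\|$ is positive. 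Taking $k=n$ gives $\|T(nf+g)\|=n\|T(f)\|+\|T(g)\|$, which contradicts the strict inequality displayed above; hence $y_0\in M_{T(nf)}$, and the inclusion $M_{T(f)}\subset M_{T(nf)}$ follows.

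The main obstacle, and the reason the induction is phrased with two conjuncts, is the bookkeeping of the peak sets: the norm recursion by itself is insufficient, and at each stage one must certify that $M_{T(kf+g)}$ still meets $M_{T(f)}$ so that Lemma~\ref{2-3-4} remains applicable. The containment $M_{T((k+1)f+g)}\subset M_{T(f)}$ delivered by Lemma~\ref{3-1} is precisely what keeps the intersection condition alive, and confirming that these peak sets stay nonempty, which is where the assumption $\|T(f)\|>0$ is used, is the delicate point of the argument.
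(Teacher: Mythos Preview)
Your argument is correct and, in my view, cleaner than the paper's. Both proofs handle the degenerate case $\|T(f)\|=0$ identically and both already have $M_{T(nf)}\subset M_{T(f)}$ from Lemma~\ref{3-2}; the difference lies in the reverse inclusion. The paper proceeds one step at a time, proving $M_{T(mf)}\subset M_{T((m+1)f)}$ by a topological approximation: for every open neighbourhood $V$ of a point $y_0\in M_{T(mf)}$ it builds, via Urysohn's lemma and surjectivity, a peaking function $T(g_V)$ supported in $V$, then uses Remark~\ref{r1} and Lemma~\ref{2-3-4} to show $M_{T((m+1)f)}\cap V\neq\emptyset$, and finally invokes the compactness Lemma~\ref{2-3-5} to conclude $y_0\in M_{T((m+1)f)}$. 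Your route is more direct: you grab a single auxiliary $g$ from Lemma~\ref{b-1} giving the strict inequality $\|T(nf)+T(g)\|<n\|T(f)\|+\|T(g)\|$, and then run an induction on $k$ that simultaneously tracks the norm $\|T(kf+g)\|=k\|T(f)\|+\|T(g)\|$ and the containment $M_{T(kf+g)}\subset M_{T(f)}$, the latter refreshed at each step by Lemma~\ref{3-1}. This avoids both the repeated Urysohn constructions and Lemma~\ref{2-3-5} entirely. The paper's approach has the minor advantage of yielding the stepwise equalities $M_{T((m+1)f)}=M_{T(mf)}$ along the way, but these are not used elsewhere, so your shortcut loses nothing.
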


\begin{proof}
Let \( f \in \pcx \) and fix it. 
We distinguish two cases.

\textbf{Case 1.} Suppose that \( T(f) = \zlb \).

By Lemma~\ref{3-3}, we have
\[
\|T(nf)\| = n \|T(f)\| = 0
\]
for all \( n \in \mathbb{N} \) with \( n \geq 2 \). 
Therefore, we conclude that
$
T(f) = \zlb = T(nf),
$
which implies that 
\[
M_{T(nf)} = L_2 = M_{T(f)}.
\]

\textbf{Case 2.} Suppose that \( T(f) \neq \zlb \).

In this case, we first show that \( M_{T((m+1)f)} = M_{T(mf)} \) for all \( m \in \mathbb{N} \).
By Lemma~\ref{3-2}, we have \( M_{T(mf)} \subset M_{T(f)} \), 
and hence,  it follows that \( M_{T(mf)} \cap M_{T(f)} = M_{T(mf)} \) is non-empty.  
Applying Lemma~\ref{3-1} to the pair \( (T(mf), T(f))\), 
we obtain
\[
M_{T((m+1)f)} = M_{T(mf + f)} \subset M_{T(mf)}.
\]

To prove the reverse inclusion, take an arbitrary point \( y_0 \in M_{T(mf)} \) and fix it. 
Let \( V \) be any open neighborhood of \( y_0 \).  
By Urysohn's lemma, there exists \( h_V \in \pcy \) such that \( h_V(y_0) = 1 = \|h_V\| \) and \( h_V(y) = 0 \) for all \( y \notin V \).  
Since \( T \colon \pcx \to \pcy \) is surjective, there exists \( g_V \in \pcx \) such that \( T(g_V) = h_V \).  
It follows from  Remark~\ref{r1} that 
\begin{equation}\label{2-3-7-2}
\|T((m+1)f) + T(g_V)\| = \|T(f) + T(mf + g_V)\|.
\end{equation}
Since \( y_0 \in M_{T(mf)} \cap M_{T(g_V)} \), 
Lemma~\ref{3-1} shows that  \( M_{T(mf + g_V)} \subset M_{T(mf)} \),  
and Lemma~\ref{3-2} implies that  \( M_{T(mf)} \subset M_{T(f)} \).  
It follows  that \( M_{T(mf + g_V)} \subset M_{T(f)} \).  
Therefore, we conclude that 
\( M_{T(mf + g_V)} \cap M_{T(f)} \) is non-empty.  
By Lemma~\ref{2-3-4}, we obtain
\begin{equation}\label{2-3-7-3}
\|T(f) + T(mf + g_V)\| = \|T(f)\| + \|T(mf + g_V)\|.
\end{equation}
Since \( y_0 \in M_{T(mf)} \cap M_{T(g_V)} \), Lemma~\ref{2-3-4} and the  equation~\eqref{n-a} yield
\[
\|T(mf + g_V)\| = \|T(mf) + T(g_V)\| = \|T(mf)\| + \|T(g_V)\|.
\]
Having in mind that  \( \|T(mf)\| = m \|T(f)\| \) by Lemma~\ref{3-3},  
it follows that 
\[
\|T(mf + g_V)\| = m \|T(f)\| + \|T(g_V)\|.
\]
Combining this equality  with \eqref{2-3-7-2} and \eqref{2-3-7-3}, we get
\[
\|T((m+1)f) + T(g_V)\| = (m+1)\|T(f)\| + \|T(g_V)\|.
\]
Applying Lemma~\ref{3-3} to \(T\left((m+1)f\right)\), 
we obtain  \( \|T((m+1)f)\| = (m+1)\|T(f)\| \), and hence, it follows that 
\[
\|T((m+1)f) + T(g_V)\| = \|T((m+1)f)\| + \|T(g_V)\|.
\]
We deduce from Lemma~\ref{2-3-4} that \( M_{T((m+1)f)} \cap M_{T(g_V)}\) is non-empty.  
Since \( T(g_V)(y) = h_V(y) = 0 \) for all \( y \notin V \), we have \( M_{T(g_V)} \subset V \).  
Therefore, we observe that \( M_{T((m+1)f)} \cap V\) is non-empty.  
We deduce from Lemma~\ref{3-3} with $T(f)\neq \zlb$ that 
\[
\|T((m+1)f)\|=(m+1)\|T(f)\|>0. 
\]
This implies $M_{T((m+1)f)}$ is compact in $L_2$. 
As \( V \) is an arbitrary open neighborhood of $y_0$, 
Lemma~\ref{2-3-5} implies \( y_0 \in M_{T((m+1)f)} \).  
Since \( y_0 \in M_{T(mf)} \) is arbitrarily chosen, 
we conclude \( M_{T(mf)} \subset M_{T((m+1)f)} \), and hence, 
we obtain 
\[
M_{T((m+1)f)} = M_{T(mf)}.
\]
Finally, for each \( n \geq 2 \), applying this equality repeatedly yields \( M_{T(nf)} = M_{T(f)} \).  
\end{proof}

From the above lemma, 
it follows that the map $T:\pcx\to \pcy$ preserves scalar multiplication by natural numbers at the point $y\in M_{T(f)}$.

\begin{lem}\label{2-3-8}
For each $f\in\pcx$ and $n\in \N$ with $n\geq 2$, 
the following identity holds:
\[
T(nf)(y_0)=nT(f)(y_0)\qquad (y_0\in M_{T(f)}). 
\]
\end{lem}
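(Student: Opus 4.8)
The plan is to derive this pointwise identity directly from the two structural facts already established for the map $T$: the norm-scaling property $\|T(nf)\| = n\|T(f)\|$ from Lemma~\ref{3-3}, and the coincidence of maximizing sets $M_{T(nf)} = M_{T(f)}$ for $n \geq 2$ from Lemma~\ref{2-3-7}. The key observation is that membership in $M_{T(g)}$ is \emph{exactly} the statement that $T(g)$ attains its norm at the given point; so once we know that a point $y_0$ lies in both $M_{T(f)}$ and $M_{T(nf)}$, the values $T(f)(y_0)$ and $T(nf)(y_0)$ may be replaced by the respective norms.

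Concretely, I would fix $y_0 \in M_{T(f)}$ and argue as follows. By the definition of $M_{T(f)}$, we have $T(f)(y_0) = \|T(f)\|$. Lemma~\ref{2-3-7} gives $M_{T(nf)} = M_{T(f)}$ for $n \geq 2$, so $y_0 \in M_{T(nf)}$ as well, whence $T(nf)(y_0) = \|T(nf)\|$. Applying Lemma~\ref{3-3} to rewrite $\|T(nf)\| = n\|T(f)\|$, and then substituting $\|T(f)\| = T(f)(y_0)$, yields
\[
T(nf)(y_0) = \|T(nf)\| = n\|T(f)\| = n\,T(f)(y_0),
\]
which is precisely the desired identity.

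There is essentially no serious obstacle here: the entire content lies in the two preceding lemmas, and this statement is simply their clean pointwise synthesis. The only point worth a remark is the degenerate case $T(f) = \zlb$, in which $M_{T(f)} = L_2$ and, by Lemma~\ref{3-3}, $T(nf) = \zlb$ as well; there the asserted identity reduces to $0 = n \cdot 0$ and holds trivially at every $y_0$, so it is automatically absorbed into the general argument above.
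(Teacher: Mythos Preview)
Your proof is correct and follows essentially the same approach as the paper: both fix $y_0\in M_{T(f)}$, invoke Lemma~\ref{2-3-7} to get $y_0\in M_{T(nf)}$, then apply Lemma~\ref{3-3} to obtain $T(nf)(y_0)=\|T(nf)\|=n\|T(f)\|=nT(f)(y_0)$. Your additional remark on the degenerate case $T(f)=\zlb$ is fine but unnecessary, since the chain of equalities already covers it.
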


\begin{proof}
Let $f\in \pcx$ and $n\in \N$ with $n\geq 2$. 
Choose $y_0\in M_{T(f)}$ arbitrarily and fix it. 
Applying  Lemma~\ref{2-3-7} to $T(f)$, 
we obtain $M_{T(nf)}=M_{T(f)}$, and thus, $y_0\in M_{T(nf)}\cap M_{T(f)}$. 
It follows from Lemma~\ref{3-3} that 
\[
T(nf)(y_0)=\sn{T(nf)}=n\sn{T(f)}=nT(f)(y_0). 
\]
Since $y_0\in M_{T(f)}$ is arbitrarily chosen, we see that Lemma~\ref{2-3-8} holds. 
\end{proof}

Now, 
we establish the equality $M_{T(f+g)}=M_{T(f)}\cap M_{T(g)}$ for functions 
$f,g\in \pcx$, assuming that $M_{T(f)}\cap M_{T(g)}$ is non-empty, as stated in the following lemma.

\begin{lem}\label{2-3-10}
Let $f, g\in \pcx$. If $M_{T(f)}\cap M_{T(g)}$ is non-empty, 
then 
\[
M_{T(f+g)}= M_{T(f)}\cap M_{T(g)}. 
\]
\end{lem}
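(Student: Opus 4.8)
The plan is to establish the two inclusions separately; the forward one is routine, and the reverse one carries the real content. For $M_{T(f+g)} \subset M_{T(f)} \cap M_{T(g)}$, I would invoke Lemma~\ref{3-1} twice: since $M_{T(f)} \cap M_{T(g)}$ is non-empty by hypothesis, the pair $(f,g)$ gives $M_{T(f+g)} \subset M_{T(f)}$, while the pair $(g,f)$ together with $T(f+g)=T(g+f)$ gives $M_{T(f+g)} \subset M_{T(g)}$; intersecting yields the claim. Before treating the reverse inclusion I would record, via Lemma~\ref{2-3-4} and \eqref{n-a}, that $\|T(f+g)\|=\|T(f)\|+\|T(g)\|=:\alpha+\beta$. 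If $\alpha=\beta=0$ then $T(f+g)=\zlb$ and all three $M$-sets equal $L_2$, so the identity is immediate; I therefore assume $\alpha+\beta>0$, which guarantees that $M_{T(f+g)}$ is a non-empty compact subset of $L_2$.

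For the reverse inclusion I would fix $y_0 \in M_{T(f)} \cap M_{T(g)}$ and show $T(f+g)(y_0)=\alpha+\beta$, which forces $y_0 \in M_{T(f+g)}$. Following the neighborhood technique of Lemma~\ref{2-3-7}, for each open neighborhood $V$ of $y_0$ I would use Urysohn's lemma and the surjectivity of $T$ to produce $g_V \in \pcx$ with $T(g_V)(y_0)=1=\|T(g_V)\|$ and $T(g_V)=0$ off $V$, so that $M_{T(g_V)} \subset V$ and $y_0 \in M_{T(g_V)}$. By Remark~\ref{r1} I would rewrite $\|T(f+g)+T(g_V)\| = \|T(f)+T(g+g_V)\|$, and by Lemma~\ref{2-3-4} with \eqref{n-a} compute $\|T(g+g_V)\|=\beta+1$. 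The key step is then to locate a maximizer inside $V$: by the forward inclusion already proved, applied to the pair $(g,g_V)$, any $w_V \in M_{T(g+g_V)}$ satisfies $w_V \in M_{T(g_V)} \subset V$, and evaluating at $w_V$ gives the lower bound $\|T(f+g)+T(g_V)\| \ge T(f)(w_V)+\beta+1$.

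The main obstacle, and the point where the argument departs from Lemma~\ref{2-3-7}, is that one cannot expect the exact equality $\|T(f+g)+T(g_V)\|=\alpha+\beta+1$ for a fixed $V$: there is no nested containment $M_{T(g)} \subset M_{T(f)}$ to exploit, so $M_{T(g+g_V)}$ need not meet $M_{T(f)}$, and consequently $M_{T(f+g)}$ need not meet $V$. I would instead close the argument by continuity. Let $z_V$ be a maximizer of the non-zero function $T(f+g)+T(g_V) \in \pcy$; combining the lower bound above with $T(f+g)(z_V) \le \alpha+\beta$ and $T(g_V)(z_V)\le 1$ forces $T(g_V)(z_V) \ge 1-(\alpha-T(f)(w_V))$, so choosing $V$ small enough that $T(f)>\alpha-1$ on $V$ (possible by continuity, since $T(f)(y_0)=\alpha$) makes $T(g_V)(z_V)>0$ and hence $z_V \in V$. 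One then reads off $T(f+g)(z_V) \ge T(f)(w_V)+\beta$. As $V$ shrinks to $\{y_0\}$ along the directed family of neighborhoods, the nets $w_V$ and $z_V$ converge to $y_0$, so the continuity of $T(f)$ and $T(f+g)$ gives $T(f+g)(y_0) \ge \alpha+\beta$; together with $T(f+g)(y_0) \le \|T(f+g)\|=\alpha+\beta$ this yields $y_0 \in M_{T(f+g)}$, completing the reverse inclusion and the proof.
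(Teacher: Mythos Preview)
Your argument is correct. The forward inclusion and the trivial case $\|T(f+g)\|=0$ match the paper exactly. For the reverse inclusion you take a genuinely different route from the paper, and it is worth recording the contrast.

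The paper's device is to set $T(g_V)=h\cdot T(g)$ rather than $T(g_V)=h$; multiplying by $T(g)$ forces $M_{T(g_V)}=M_h\cap M_{T(g)}\subset M_{T(g)}$. This extra containment is what makes everything exact: from $y_0\in M_{T(f)}\cap M_{T(g_V)}$ one gets $M_{T(f+g_V)}\subset M_{T(g_V)}\subset M_{T(g)}$, hence $\|T(g)+T(f+g_V)\|=\|T(g)\|+\|T(f+g_V)\|$ and, after the Remark~\ref{r1} shuffle and a second use of Lemma~\ref{2-3-4}, the \emph{equality} $\|T(f+g)+T(g_V)\|=\|T(f+g)\|+\|T(g_V)\|$. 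Lemma~\ref{2-3-4} then gives $M_{T(f+g)}\cap M_{T(g_V)}\neq\emptyset$, hence $M_{T(f+g)}\cap V\neq\emptyset$, and Lemma~\ref{2-3-5} finishes with no limiting argument.

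Your choice $T(g_V)=h$ drops the containment $M_{T(g_V)}\subset M_{T(g)}$, so you cannot expect exact equality and instead squeeze: you bound $\|T(f+g)+T(g_V)\|$ below by $T(f)(w_V)+\beta+1$, trap a maximizer $z_V$ of $T(f+g)+T(g_V)$ inside $V$, deduce $T(f+g)(z_V)\ge T(f)(w_V)+\beta$, and then pass to the limit along the neighborhood net. This is perfectly valid (the nets $w_V,z_V$ lie in $V$ for all small $V$, so they converge to $y_0$, and continuity of $T(f)$ and $T(f+g)$ transfers the inequality). The trade-off: the paper's multiplicative trick buys an exact algebraic identity and avoids any limit; your approach avoids that trick at the cost of a net-convergence closure.
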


\begin{proof}
Let \( f, g \in \pcx \) and fix them. 
We distinguish two cases.

\textbf{Case 1.} Suppose that \( T(f+g) = \zlb \).

It follows from \eqref{n-a} that 
\[
\|T(f)+T(g)\|=\|T(f+g)\|=0.  
\]
This implies that $T(f)=\zlb$ and $T(g)=\zlb$, 
since $T(f), T(g)\in \pcy$.  
Therefore, we obtain 
\[
M_{T(f+g)}=L_2=M_{T(f)}\cap M_{T(g)}. 
\]

\textbf{Case 2.} Suppose that \( T(f+g) \neq \zlb \). 

Since \( M_{T(f)} \cap M_{T(g)}\) is non-empty, 
Lemma~\ref{3-1} yields 
$
M_{T(f+g)} \subset M_{T(f)}.
$
By the commutativity \( T(f+g) = T(g+f) \), 
we may apply Lemma~\ref{3-1} again to obtain 
$
M_{T(f+g)} \subset M_{T(g)}.
$
Hence, we have
\[
M_{T(f+g)} \subset M_{T(f)} \cap M_{T(g)}.
\]

To prove the reverse inclusion, 
we choose  \( y_0 \in M_{T(f)} \cap M_{T(g)} \)  arbitrarily and fix it. 
Let \( V \) be any open neighborhood of \( y_0 \).  
By Urysohn's lemma, there exists \( h \in \pp{y_0} \) such that \( \|h\| = 1 \), \( h(y_0) = 1 \), and \( h(y) = 0 \) for all \( y \in L_2 \setminus V \).  
Set \( h_V = h \cdot T(g) \in \pcy \).  
Since \( T \colon \pcx \to \pcy \) is surjective, there exists \( g_V \in \pcx \) such that \( T(g_V) = h_V \).  
It follows from $T(g_V)=h\cdot T(g)$ that  
\begin{equation}\label{2-3-10-1}
M_{T(g_V)} = M_h \cap M_{T(g)}.
\end{equation}
Since $h\in \pp{y_0}$ and $y_0\in M_{T(f)}\cap M_{T(g)}$,  
we have \( y_0 \in M_h \cap M_{T(g)} \), that is,  \( y_0 \in M_{T(g_V)} \).  
Hence, 
we conclude that $y_0\in M_{T(f)}\cap M_{T(g_V)}$.  
Lemma~\ref{3-1} and \eqref{2-3-10-1} give
\[
M_{T(f + g_V)} \subset M_{T(g_V)} \subset M_{T(g)},
\]
which implies \( M_{T(f + g_V)} \cap M_{T(g)}\) is non-empty. 
Combining Lemma~\ref{2-3-4} and Remark~\ref{r1}, we obtain
\begin{equation}\label{2-3-10-3}
\|T(f + g) + T(g_V)\| = \|T(g) + T(f + g_V)\| = \|T(g)\| + \|T(f + g_V)\|.
\end{equation}
On the other hand, since \( y_0 \in M_{T(f)} \cap M_{T(g_V)} \), 
we deduce from Lemma~\ref{2-3-4} with \eqref{n-a} that 
\[
\|T(f + g_V)\| = \|T(f)\| + \|T(g_V)\|.
\]
Substituting into \eqref{2-3-10-3}, we get
\begin{equation}\label{2-3-10-4}
\|T(f + g) + T(g_V)\| = \|T(f)\| + \|T(g_V)\| + \|T(g)\|.
\end{equation}
Since \( M_{T(f)} \cap M_{T(g)}\) is non-empty, 
Lemma~\ref{2-3-4} ensures that
\[
\|T(f) + T(g)\| = \|T(f)\| + \|T(g)\|.
\]
Combining this equality with \eqref{n-a}, we have \( \|T(f + g)\| = \|T(f)\| + \|T(g)\| \).  
Thus, it follows from \eqref{2-3-10-4} that 
\[
\|T(f + g) + T(g_V)\| = \|T(f + g)\| + \|T(g_V)\|.
\]
Applying Lemma~\ref{2-3-4} again to \( T(f + g) \) and \( T(g_V) \), we have
$
M_{T(f + g)} \cap M_{T(g_V)}
$
is non-empty. 
Since \( h(y) = 0 \) for all \( y \notin V \), we have \( M_h \subset V \).  
We deduce from \eqref{2-3-10-1} that 
\[
M_{T(g_V)} = M_h \cap M_{T(g)} \subset V.
\]
This shows that 
$
M_{T(f + g)} \cap V
$
is non-empty. 
Since \( T(f+g) \neq \zlb \), the set \( M_{T(f+g)} \) 
is a non-empty compact subset of \( L_2 \).
Lemma~\ref{2-3-5} implies that \( y_0 \in M_{T(f + g)} \), 
as \( V \) is an arbitrary open neighborhood of \( y_0 \).   
Since \( y_0 \in M_{T(f)} \cap M_{T(g)} \) was arbitrarily chosen, 
we conclude that
\[
M_{T(f)} \cap M_{T(g)} \subset M_{T(f + g)}.
\]
Thus, we obtain the desired equality.
\end{proof}

\section{Key Lemmas}

We retain the notation introduced in the previous section.
To prepare for the main result, which will be established in the next section,
we present several key lemmas that play a central role in our argument.

\begin{lem}\label{sub1}
Let \( u \in \pcy \) and \( y_0 \in L_2 \).  
If \( u(y_0) < \|u\| \), then there exists \( v \in \pp{y_0} \) with \( \|v\| > 0 \) such that  
\( u + v \in \pp{y_0} \) and \( \|u + v\| = \|u\| \).
\end{lem}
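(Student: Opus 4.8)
The plan is to construct the desired $v$ explicitly as a suitably capped, compactly supported bump centered at $y_0$. Write $M = \|u\|$ and set $c = M - u(y_0)$, which is strictly positive by the hypothesis $u(y_0) < \|u\|$. The shape of $v$ is dictated by the requirements themselves: since we want $u+v \in \pp{y_0}$ with $\|u+v\| = \|u\| = M$, we are forced to have $(u+v)(y_0) = M$, hence $v(y_0) = c$; and since we want $v \in \pp{y_0}$, we need $\|v\| = v(y_0) = c$, so $v$ must be bounded above by $c$. At the same time $\|u+v\| = M$ forces $u(y) + v(y) \le M$, i.e.\ $v(y) \le M - u(y)$, for every $y \in L_2$. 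Thus $v$ must satisfy $0 \le v \le \min(M - u,\, c)$ with equality at $y_0$.

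First I would introduce the continuous function $\psi = \min(M - u,\, c)$. Since $u(y) \le M$ for all $y$, one has $M - u \ge 0$, so $\psi \ge 0$; moreover $\psi \le c$ everywhere, $\psi \le M - u$ everywhere, and $\psi(y_0) = \min(c, c) = c$ because $M - u(y_0) = c$. This single function packages both upper bounds that $v$ has to respect while forcing the correct value at $y_0$.

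The remaining obstacle is that $\psi$ itself need not vanish at infinity (indeed $M - u \to M \ge c$ off large compacta), so $\psi$ need not lie in $\pcy$. To repair this I would invoke Urysohn's lemma for the locally compact Hausdorff space $L_2$ to obtain a continuous $\eta \colon L_2 \to [0,1]$ with compact support satisfying $\eta(y_0) = 1$, and then set $v = \eta\,\psi$. Being the product of a compactly supported continuous function with a bounded continuous function, $v$ is continuous, nonnegative and compactly supported, hence $v \in \pcy$.

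Finally I would verify the four required properties directly from the definitions. We have $v(y_0) = \eta(y_0)\psi(y_0) = c$ and $0 \le v \le \psi \le c$, so $\|v\| = c = v(y_0) > 0$, which gives $v \in \pp{y_0}$ with $\|v\| > 0$. The bound $v \le \psi \le M - u$ yields $u + v \le M$ pointwise, hence $\|u+v\| \le M$, while $(u+v)(y_0) = u(y_0) + c = M$ forces $\|u+v\| = M = \|u\|$ and simultaneously shows that $u+v$ attains its maximum at $y_0$, i.e.\ $u + v \in \pp{y_0}$. I expect the only genuinely delicate point to be the vanishing-at-infinity requirement, which is precisely what the compactly supported Urysohn factor $\eta$ is there to guarantee; the capping by $c$ in $\psi$ is what reconciles the condition $v \in \pp{y_0}$ with the equality $\|u+v\| = \|u\|$.
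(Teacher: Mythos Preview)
Your proof is correct and considerably more direct than the paper's. The paper constructs $v$ as an infinite series: it slices the sublevel region $\{y:\|u\|-u(y)<r\}$ into countably many compact bands $F_n=\{y: r(1-2^{-n})\le\|u\|-u(y)\le r(1-2^{-(n+1)})\}$, picks for each $n$ a Urysohn bump $v_n\in\pp{y_0}$ vanishing on $F_n$, and sets $v=r\sum_{n\ge1}2^{-n}v_n$, then checks the bound $u+v\le\|u\|$ case by case according to which band $y$ falls into. Your single-shot construction $v=\eta\cdot\min(\|u\|-u,\,c)$ encodes the exact pointwise ceiling $v\le\|u\|-u$ that is required, together with the cap $v\le c$ that forces $v\in\pp{y_0}$, in one continuous function; the compactly supported Urysohn factor $\eta$ then handles the vanishing-at-infinity condition in one stroke. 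The payoff of your approach is brevity and transparency: every required inequality is immediate from $\min$ and $0\le\eta\le1$, with no series estimate or band decomposition needed. The paper's approach, by contrast, trades this algebraic simplicity for a construction that stays within the class of weighted sums of peak functions; it buys nothing extra for the present lemma.
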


\begin{proof}
Set \( r = \|u\| - u(y_0) > 0 \).  
For each \( n \in \mathbb{N} \), define a  subset \( F_n \subset L_2 \) by
\[
F_n = \left\{ y \in L_2 \, \middle| \, 
r\left(1 - \dfrac{1}{2^n}\right) \leq \|u\| - u(y) \leq r\left(1 - \dfrac{1}{2^{n+1}}\right) \right\}.
\]
Since \( u \in \pcy \), the set \( F_n \) is a compact subset in $L_2$ for each \( n\in \N \).  
As \( y_0 \notin F_n \), 
we may apply Urysohn's lemma to obtain a function \( v_n \in \pp{y_0} \) satisfying
\begin{equation}\label{sub1-2}
\|v_n\| = 1 \quad \text{and} \quad v_n(y) = 0 \quad \text{for all } y \in F_n.
\end{equation}
Define 
\(
v = r \sum_{n=1}^\infty v_n/2^n.
\)
Because the series converges absolutely with respect to the norm, 
we obtain \( v \in \pcy \).

We claim that this function \( v \) satisfies the desired properties.  
By definition, we have
\[
v(y_0) = r \sum_{n=1}^\infty \frac{v_n(y_0)}{2^n} = r, \quad \text{and} \quad \|v\| \leq r.
\]
This shows that \( v \in \pp{y_0} \) and \( \|v\| = r > 0 \).

Let \( y \in L_2 \) be arbitrary.  
First, suppose \( y \notin \bigcup_{n=1}^\infty F_n \).  
Then we obtain 
\[ 
\|u\| - u(y) = r = \|u\| - u(y_0).
\]   
This implies that  \( u(y) = u(y_0) \), and hence,   
 we conclude that 
\[
u(y) + v(y) \leq u(y_0) + r = \|u\|.
\]
Next, assume \( y \in F_m \) for some \( m \in \mathbb{N} \).  
Then by the definition of \( F_m \), we have
\[
r\left(1-\frac{1}{2^m}\right)\leq \|u\|-u(y). 
\]
We deduce from $r=\|u\|-u(y_0)$ that 
\begin{equation}\label{sub1-1}
u(y) \leq u(y_0) + \frac{r}{2^m}.
\end{equation}
Since \( v_m(y) = 0 \) by \eqref{sub1-2}, 
it follows that 
\[
v(y) = r \sum_{n \neq m} \frac{v_n(y)}{2^n} \leq r\left(1 - \frac{1}{2^m}\right) = r - \frac{r}{2^m}.
\]
Combining this inequalities with \eqref{sub1-1}, we get
\[
u(y) + v(y) \leq u(y_0) + r = \|u\|.
\]
Since \( y \in L_2 \) is arbitrary, 
it follows that  \( \|u + v\| \leq \|u\| \).  
On the other hand, 
we derive from $v(y_0)=\|v\|=r$ that   
\[ 
u(y_0) + v(y_0) = u(y_0) + r = \|u\|.  
\]
This shows that $\|u+v\|\geq u(y_0)+v(y_0)=\|u\|$. 
Therefore, we observe that 
 \( \|u + v\| = \|u\| \), and hence, \( u + v \in \pp{y_0} \).
\end{proof}

\begin{lem}\label{sub2}
Let \( u, v \in \pcy \). 
Then for each \( y_0 \in L_2 \), there exists \( h \in \pp{y_0} \) such that 
\[
u + h, \; v + h \in \pp{y_0}.
\]
\end{lem}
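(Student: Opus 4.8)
The plan is to reduce the two simultaneous requirements to a single application of Lemma~\ref{sub1}. Fix $u, v \in \pcy$ and $y_0 \in L_2$. The condition $u + h,\, v + h \in \pp{y_0}$ asks that both $u+h$ and $v+h$ attain their maxima at $y_0$; unwinding the suprema, this is exactly the pair of pointwise inequalities
\[
h(y_0) - h(y) \ge u(y) - u(y_0) \quad\text{and}\quad h(y_0) - h(y) \ge v(y) - v(y_0) \qquad (y \in L_2),
\]
together with $h \in \pp{y_0}$. I would therefore introduce the single auxiliary function
\[
p(y) = \max\{\, u(y) - u(y_0),\; v(y) - v(y_0),\; 0 \,\} \qquad (y \in L_2),
\]
so that the two inequalities collapse to the one requirement $h(y_0) - h(y) \ge p(y)$ for all $y$. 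The first thing to check is that $p \in \pcy$: it is continuous as a finite maximum of continuous functions, it is nonnegative by the inclusion of the third entry, and since $u, v \in C_0(L_2)$ one has $u(y) - u(y_0) \to -u(y_0) \le 0$ and $v(y) - v(y_0) \to -v(y_0) \le 0$ at infinity, so $p$ vanishes at infinity. Note also that $p(y_0) = 0$.

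Next I would feed $p$ into Lemma~\ref{sub1} at the point $y_0$. If $\|p\| = 0$, then $p \equiv 0$, which forces $u, v \in \pp{y_0}$; in that degenerate case any $h \in \pp{y_0}$ (produced by Urysohn's lemma) works, since $y_0 \in M_u \cap M_h$ and $y_0 \in M_v \cap M_h$ let Lemma~\ref{2-3-4} yield $u+h,\, v+h \in \pp{y_0}$. If instead $\|p\| > 0$, then $p(y_0) = 0 < \|p\|$, so Lemma~\ref{sub1} applies and produces $h \in \pp{y_0}$ with $\|h\| > 0$, $p + h \in \pp{y_0}$, and $\|p+h\| = \|p\|$. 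Reading off $p + h \in \pp{y_0}$ and using $p(y_0) = 0$ gives $h(y_0) = \|p+h\| = \|p\|$, and the inequality $(p+h)(y) \le \|p\| = h(y_0)$ rearranges precisely to the master inequality $h(y_0) - h(y) \ge p(y)$ for every $y$.

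Finally I would verify the conclusion from this master inequality. Since $p(y) \ge u(y) - u(y_0)$, for all $y$ we get
\[
(u+h)(y) = u(y) + h(y) \le u(y) + h(y_0) - \big(u(y) - u(y_0)\big) = (u+h)(y_0),
\]
so $u + h$ attains its maximum at $y_0$, i.e.\ $u + h \in \pp{y_0}$; the identical computation with $v$ gives $v + h \in \pp{y_0}$, while $h \in \pp{y_0}$ is part of the output of Lemma~\ref{sub1}. The main conceptual step—the only place needing an idea rather than bookkeeping—is recognizing that the two separate peaking constraints are simultaneously governed by the single dominating function $p$, so that Lemma~\ref{sub1} (which dominates one function at a prescribed point without inflating the norm) can be invoked just once. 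The remaining obstacle is the routine but necessary verification that $p$ genuinely lies in the positive cone $\pcy$ (in particular that it vanishes at infinity), together with the separate handling of the degenerate case $\|p\| = 0$, where Lemma~\ref{sub1} does not directly apply.
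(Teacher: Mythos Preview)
Your proof is correct and takes a genuinely different route from the paper's. The paper argues by cases (both of $u,v$ peak at $y_0$; exactly one does; neither does), and in the generic case applies Lemma~\ref{sub1} \emph{separately} to $u$ and to $v$ to obtain $h_u, h_v \in \pp{y_0}$, then sets $h = h_u + h_v$, using the fact that sums of elements of $\pp{y_0}$ stay in $\pp{y_0}$. Your approach instead packages the two constraints into the single dominating function $p = \max\{u - u(y_0),\, v - v(y_0),\, 0\}$ and invokes Lemma~\ref{sub1} \emph{once} on $p$; the resulting $h$ then handles both defects simultaneously via the master inequality $h(y_0) - h(y) \ge p(y)$. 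Your route is more conceptual and economical (one application of Lemma~\ref{sub1}, essentially no case analysis), at the modest cost of verifying $p \in \pcy$. The paper's route is more pedestrian but needs no auxiliary construction. One small simplification in your degenerate case $\|p\| = 0$: the zero function $\zlb$ already lies in $\pp{y_0}$ and trivially satisfies $u + \zlb,\, v + \zlb \in \pp{y_0}$, so there is no need to invoke Urysohn or Lemma~\ref{2-3-4} there.
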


\begin{proof}
If both \( u \) and \( v \) belong to \( \pp{y_0} \), then setting \( h = \zlb \in \pcy \) clearly yields \( u + h, v + h \in \pp{y_0} \).

We next consider the case where \( u \notin \pp{y_0} \) and \( v \in \pp{y_0} \). 
Since \( u(y_0) < \|u\| \), Lemma~\ref{sub1} guarantees the existence of \( h \in \pp{y_0} \) such that \( u + h \in \pp{y_0} \) and \( \|u + h\| = \|u\| \).  
Because both \( v \in \pp{y_0} \) and \( h \in \pp{y_0} \), 
we derive from the triangle inequality that 
\[
\|v\| + \|h\|=v(y_0) + h(y_0)\leq \|v + h\| \leq \|v\| + \|h\|,   
\]
and thus, 
we obtain 
$v(y_0) + h(y_0)= \|v + h\|$. 
This shows that 
 \( v + h \in \pp{y_0} \).  
The same argument applies when \( u \in \pp{y_0} \) and \( v \notin \pp{y_0} \).

Finally, consider the case where \( u, v \notin \pp{y_0} \).  
By Lemma~\ref{sub1}, there exist \( h_u, h_v \in \pp{y_0} \) such that 
\[
u + h_u \in \pp{y_0}, \quad \|u + h_u\| = \|u\|, \quad v + h_v \in \pp{y_0}, \quad \|v + h_v\| = \|v\|.
\]
Define \( h = h_u + h_v \).  
Since both \( h_u, h_v \in \pp{y_0} \), their sum \( h \in \pp{y_0} \) as well.  
We deduce from $u+h_u, h_v\in \pp{y_0}$ that 
\[
\|u+h_u\|+\|h_v\|=\left(u(y_0)+h_u(y_0)\right)+h_v(y_0)\leq \|u+(h_u+h_v)\|\leq \|u+h_u\|+\|h_v\|, 
\]
and hence, $u(y_0)+h(y_0)=\|u+h\|$. 
This shows that \( u + h \in \pp{y_0} \).  
The same reasoning shows that \( v + h \in \pp{y_0} \) as well.  
This completes the proof.
\end{proof}

We now demonstrate that the map $T:\pcx\to\pcy$, which satisfies \eqref{n-a}, is additive under certain conditions. 
The next result asserts that if the functions $T(f)$ and $T(g)$ for $f,g\in \pcx$ attain their maximum at the same point, then 
the map $T$ is additive at that point.

\begin{lem}\label{e-1}
Let \( f, g \in \pcx \). If \( T(f), T(g) \in \pp{y_0} \) for some \( y_0 \in L_2 \),  
then \( T(f + g) \in \pp{y_0} \) and  
\[
T(f + g)(y_0) = T(f)(y_0) + T(g)(y_0).
\]
\end{lem}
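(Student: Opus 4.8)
The plan is to read the hypothesis directly in terms of the maximizing sets. By definition of $\pp{y_0}$, the assumption $T(f),T(g)\in\pp{y_0}$ says precisely that $T(f)(y_0)=\Vinf{T(f)}$ and $T(g)(y_0)=\Vinf{T(g)}$, i.e. $y_0\in M_{T(f)}$ and $y_0\in M_{T(g)}$. Hence $y_0\in M_{T(f)}\cap M_{T(g)}$, and in particular this intersection is non-empty. This is exactly the hypothesis required to bring the structural lemmas of Section~\ref{2} to bear, so the whole argument reduces to invoking them in the right order.

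First I would apply Lemma~\ref{2-3-10} to the pair $(f,g)$: since $M_{T(f)}\cap M_{T(g)}$ is non-empty, it yields $M_{T(f+g)}=M_{T(f)}\cap M_{T(g)}$. Because $y_0$ belongs to the right-hand side, we obtain $y_0\in M_{T(f+g)}$, which by definition means $T(f+g)(y_0)=\Vinf{T(f+g)}$, that is, $T(f+g)\in\pp{y_0}$. This already settles the first assertion of the lemma without any further work. (One could instead derive the inclusion $M_{T(f+g)}\subset M_{T(f)}\cap M_{T(g)}$ from two applications of Lemma~\ref{3-1}, but that is the wrong direction; it is precisely the reverse inclusion supplied by Lemma~\ref{2-3-10} that places $y_0$ inside $M_{T(f+g)}$.)

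For the numerical identity I would chain together the norm equalities now at my disposal. Since $y_0\in M_{T(f+g)}$ we have $T(f+g)(y_0)=\Vinf{T(f+g)}$; by \eqref{n-a} this equals $\Vinf{T(f)+T(g)}$; since $M_{T(f)}\cap M_{T(g)}$ is non-empty, Lemma~\ref{2-3-4} converts this into $\Vinf{T(f)}+\Vinf{T(g)}$; and finally the hypothesis $T(f),T(g)\in\pp{y_0}$ rewrites these two summands as $T(f)(y_0)$ and $T(g)(y_0)$. Reading the chain from end to end gives $T(f+g)(y_0)=T(f)(y_0)+T(g)(y_0)$, as required.

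I do not anticipate a genuine obstacle: the statement is essentially a corollary of Lemma~\ref{2-3-10} combined with Lemma~\ref{2-3-4} and \eqref{n-a}, and the possibly degenerate case $T(f+g)=\zlb$ is already absorbed into Lemma~\ref{2-3-10}. The only points demanding care are to check the non-emptiness hypothesis of each lemma before invoking it and to translate cleanly between membership in $\pp{y_0}$ and the pointwise equalities $T(f)(y_0)=\Vinf{T(f)}$, $T(g)(y_0)=\Vinf{T(g)}$. It is worth remarking that the auxiliary Lemmas~\ref{sub1} and~\ref{sub2}, although available, are not needed for this particular argument; they presumably come into play in the subsequent results, where a common maximizer $y_0$ for $T(f)$ and $T(g)$ is not assumed at the outset and must be manufactured.
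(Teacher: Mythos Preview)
Your proposal is correct and follows essentially the same route as the paper's own proof: both read the hypothesis as $y_0\in M_{T(f)}\cap M_{T(g)}$, invoke Lemma~\ref{2-3-10} to obtain $y_0\in M_{T(f+g)}$, and then chain \eqref{n-a} with Lemma~\ref{2-3-4} to convert norms into point values at $y_0$. Your side remarks about the direction of Lemma~\ref{3-1} and the irrelevance of Lemmas~\ref{sub1}--\ref{sub2} here are also accurate.
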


\begin{proof}
Since \( T(f), T(g) \in \pp{y_0} \), it follows that \( y_0 \in M_{T(f)} \cap M_{T(g)} \). 
Hence, 
the intersection $M_{T(f)} \cap M_{T(g)}$ 
 is non-empty.  
We deduce from  Lemma~\ref{2-3-10} that 
\( y_0 \in M_{T(f+g)} \).  
This shows that  \( T(f + g) \in \pp{y_0} \).  
Combining  Lemma~\ref{2-3-4}  with the equation~\eqref{n-a}, 
it follows from  
 the assumption \( y_0 \in M_{T(f)} \cap M_{T(g)} \) that 
\[
\|T(f + g)\| = \|T(f)\| + \|T(g)\|.
\]
Since \( T(f), T(g), T(f + g) \in \pp{y_0} \), we obtain 
\[
T(f + g)(y_0) = \|T(f + g)\| = \|T(f)\| + \|T(g)\| = T(f)(y_0) + T(g)(y_0),
\]  
as desired.
\end{proof}

The following lemma states that if the functions $T(f)+T(g)$ and $T(g)$ attain their maximum at some point $y_0\in L_2$, 
the map $T$ is additive at  $y_0$.

\begin{lem}\label{m-1}
Let \( y_0 \in L_2 \) and suppose that \( T(f) \in \pcy \). 
If there exists \( T(g) \in \pp{y_0} \) such that \( T(f) + T(g) \in \pp{y_0} \), 
then \( T(f+g) \in \pp{y_0} \) and
\[
T(f+g)(y_0) = T(f)(y_0) + T(g)(y_0).
\]
\end{lem}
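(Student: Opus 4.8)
The plan is to reduce the claim to the single membership statement $T(f+g)\in\pp{y_0}$: once this is established, the value identity follows immediately, since \eqref{n-a} together with the hypothesis $T(f)+T(g)\in\pp{y_0}$ gives $\|T(f+g)\|=\|T(f)+T(g)\|=(T(f)+T(g))(y_0)=T(f)(y_0)+T(g)(y_0)$, while $T(f+g)\in\pp{y_0}$ means precisely $T(f+g)(y_0)=\|T(f+g)\|$. The degenerate case $T(f+g)=\zlb$ forces $T(f)=T(g)=\zlb$ and is trivial, so I would assume $T(f+g)\neq\zlb$, whence $M_{T(f+g)}$ is compact.

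To prove $y_0\in M_{T(f+g)}$, I would follow the localization scheme used in Lemmas~\ref{2-3-7} and~\ref{2-3-10}, based on Lemma~\ref{2-3-5}: it suffices to show that $M_{T(f+g)}\cap V$ is non-empty for every open neighborhood $V$ of $y_0$. Fix such a $V$. By Urysohn's lemma choose $h\in\pp{y_0}$ with $\|h\|=1$ and $h=0$ off $V$, so that $M_h\subset V$; by surjectivity of $T$ pick $g_V\in\pcx$ with $T(g_V)=h$. Then $T(g_V)\in\pp{y_0}$ and $M_{T(g_V)}=M_h\subset V$. The goal then becomes the norm equality $\|T(f+g)+T(g_V)\|=\|T(f+g)\|+\|T(g_V)\|$, for Lemma~\ref{2-3-4} would then yield $M_{T(f+g)}\cap M_{T(g_V)}\neq\emptyset$, and the inclusion $M_{T(g_V)}\subset V$ would give $M_{T(f+g)}\cap V\neq\emptyset$.

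The crux is establishing this norm equality without presupposing $T(f+g)\in\pp{y_0}$ (which would be circular). I would rewrite, via Remark~\ref{r1}, $\|T(f+g)+T(g_V)\|=\|T(f)+T(g+g_V)\|$, and then bound below by evaluating at $y_0$: $\|T(f)+T(g+g_V)\|\geq T(f)(y_0)+T(g+g_V)(y_0)$. Since both $T(g)$ and $T(g_V)$ lie in $\pp{y_0}$, Lemma~\ref{e-1} applies to the pair $(g,g_V)$ and gives $T(g+g_V)(y_0)=T(g)(y_0)+T(g_V)(y_0)$. Regrouping the terms as $\bigl(T(f)(y_0)+T(g)(y_0)\bigr)+T(g_V)(y_0)$ and using $T(f)+T(g)\in\pp{y_0}$ (so the first bracket equals $\|T(f)+T(g)\|=\|T(f+g)\|$ by \eqref{n-a}) together with $T(g_V)\in\pp{y_0}$ (so $T(g_V)(y_0)=\|T(g_V)\|$) shows $\|T(f+g)+T(g_V)\|\geq\|T(f+g)\|+\|T(g_V)\|$. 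The triangle inequality supplies the reverse inequality, giving the desired equality.

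Finally, letting $V$ range over all open neighborhoods of $y_0$ and invoking Lemma~\ref{2-3-5} (applicable because $M_{T(f+g)}$ is compact) yields $y_0\in M_{T(f+g)}$, i.e. $T(f+g)\in\pp{y_0}$, which closes the argument as described in the first paragraph. I expect the main obstacle to be exactly the potential circularity in the lower bound: the hypothesis controls $T(f)$ only through the combination $T(f)+T(g)$, so the decisive idea is to let the $T(g)$-part absorb $T(f)$ by passing to $T(g+g_V)\in\pp{y_0}$ through Lemma~\ref{e-1} and the functional equation, rather than attempting to estimate $T(f+g)(y_0)$ directly.
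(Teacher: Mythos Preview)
Your proof is correct and takes a genuinely different route from the paper. The paper argues by contradiction: assuming $T(f+g)(y_0)<\|T(f+g)\|$, it invokes Lemma~\ref{sub1} to produce $h_0\in\pp{y_0}$ with $\|h_0\|>0$ and $\|T(f+g)+h_0\|=\|T(f+g)\|$, takes $g_0$ with $T(g_0)=h_0$, and then---via Remark~\ref{r1} and Lemma~\ref{e-1} applied to $(g,g_0)$, exactly as you do---deduces $T(g_0)(y_0)\leq 0$, contradicting $\|h_0\|>0$. Your direct approach instead recycles the Urysohn/localization scheme of Lemmas~\ref{2-3-7} and~\ref{2-3-10}: you show $M_{T(f+g)}\cap V\neq\emptyset$ for every neighborhood $V$ of $y_0$ and close with Lemma~\ref{2-3-5}. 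The paper's route is a bit shorter and exploits the freshly built Lemma~\ref{sub1}; your route is more self-contained (it does not use Lemma~\ref{sub1} at all here) and stays closer to the techniques already established in Section~\ref{2}. Both share the same core computation---Remark~\ref{r1} followed by Lemma~\ref{e-1} on the pair $(g,g_V)$---and the same way of exploiting the hypothesis only through the combination $T(f)+T(g)\in\pp{y_0}$.
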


\begin{proof}
We first prove that \( T(f+g) \in \pp{y_0} \). Suppose, to the contrary, that
\[
T(f+g)(y_0) < \|T(f+g)\|.
\]
Then, by Lemma~\ref{sub1}, there exists \( h_0 \in \pp{y_0} \) with \( \|h_0\| > 0 \) such that
\begin{equation}\label{nn-1}
\|T(f+g) + h_0\| = \|T(f+g)\|.
\end{equation}
On the other hand, since \( T(f) + T(g) \in \pp{y_0} \) and \( T \) satisfies \eqref{n-a}, we have
\[
T(f)(y_0) + T(g)(y_0) = \|T(f) + T(g)\| = \|T(f+g)\|.
\]
Combining the equality \eqref{nn-1} with the previous equalities, 
we obtain
\begin{equation}\label{m-1-2}
T(f)(y_0) + T(g)(y_0) = \|T(f+g) + h_0\|.
\end{equation}
Since \( T \) is surjective, there exists \( g_0 \in \pcx \) such that \( T(g_0) = h_0 \). 
By Remark~\ref{r1}, it follows that
\[
\|T(f+g) + T(g_0)\| = \|T(f) + T(g + g_0)\|.
\]
Substituting into \eqref{m-1-2}, we obtain
\begin{equation}\label{m-1-3}
T(f)(y_0) + T(g)(y_0) = \|T(f) + T(g + g_0)\| \geq T(f)(y_0) + T(g + g_0)(y_0).
\end{equation}
Since both \( T(g) \) and \( T(g_0) = h_0 \) belong to \( \pp{y_0} \), we have \( y_0 \in M_{T(g)} \cap M_{T(g_0)} \). 
It follows from Lemma~\ref{e-1} that
\[
T(g + g_0)(y_0) = T(g)(y_0) + T(g_0)(y_0).
\]
Entering this equality  into \eqref{m-1-3} yields
\[
T(f)(y_0) + T(g)(y_0) \geq T(f)(y_0) + T(g)(y_0) + T(g_0)(y_0),
\]
which implies \( T(g_0)(y_0) \leq 0 \). However, since \( T(g_0) = h_0 \in \pp{y_0} \), we must have \( \|h_0\| = h_0(y_0) = T(g_0)(y_0) \geq 0 \). 
We conclude that  \( T(g_0)(y_0) = 0 \), and hence \( \|h_0\| = 0 \), 
which contradicts  \( \|h_0\| > 0 \). 
Therefore, 
we must have $T(f+g)(y_0)=\|T(f+g)\|$. 
This shows that  \( T(f+g) \in \pp{y_0} \).

Finally, since \( T(f+g), T(f)+T(g) \in \pp{y_0} \), and \( T \) satisfies \eqref{n-a}, it follows that
\[
T(f+g)(y_0) = \|T(f+g)\| = \|T(f) + T(g)\| = T(f)(y_0) + T(g)(y_0).
\]
This completes the proof.
\end{proof}

\section{Proof of the Main Result}\label{sec4}

We are now in a position to prove the main result.

\begin{proof}[\bf Proof of Theorem~\ref{thm1}]
Let $f, g \in \pcx$, and fix an arbitrary point $y_0 \in L_2$. 
We first show that
\[
T(f+g)(y_0) \leq T(f)(y_0) + T(g)(y_0).
\]
By Lemma~\ref{sub1}, there exist functions $h_f, h_g \in \pp{y_0}$ such that 
$T(f) + h_f, T(g) + h_g \in \pp{y_0}$ and 
\[ 
\|T(f) + h_f\| = \|T(f)\|, \quad \|T(g) + h_g\| = \|T(g)\|.
\]
If $T(f) \in \pp{y_0}$ (resp.~$T(g) \in \pp{y_0}$), we may take $h_f = \zlb$ (resp.~$h_g = \zlb$).
Since $T$ is surjective, there exist $f_0, g_0 \in \pcx$ such that $T(f_0) = h_f$ and $T(g_0) = h_g$. 
Hence, we obtain 
\[ 
T(f) + T(f_0), \: T(g) + T(g_0) \in \pp{y_0},
\]
and
\begin{equation}\label{mr-1}
\|T(f) + T(f_0)\| = \|T(f)\|, \quad \|T(g) + T(g_0)\| = \|T(g)\|.
\end{equation}
Applying Lemma~\ref{m-1} to the pairs $(T(f), T(f_0))$ and $(T(g), T(g_0))$ respectively,  we obtain 
\[
T(f + f_0), \: T(g + g_0) \in \pp{y_0},
\]
and
\begin{align*}
T(f + f_0)(y_0) = T(f)(y_0) + T(f_0)(y_0), \:
T(g + g_0)(y_0) = T(g)(y_0) + T(g_0)(y_0).
\end{align*}
Adding these two equalities, we observe that 
\begin{equation}\label{mr-2}
T(f + f_0)(y_0) + T(g + g_0)(y_0) = T(f)(y_0) + T(f_0)(y_0) + T(g)(y_0) + T(g_0)(y_0).
\end{equation}
Since $T(f + f_0), T(g + g_0) \in \pp{y_0}$, Lemma~\ref{e-1} yields
\begin{align*}
T\big((f + f_0) + (g + g_0)\big)(y_0) &= T(f + f_0)(y_0) + T(g + g_0)(y_0), \\
\left\|T\big((f + f_0) + (g + g_0)\big)\right\| &= T\big((f + f_0) + (g + g_0)\big)(y_0).
\end{align*}
Combining the last two equalities with \eqref{mr-2}, we obtain
\begin{equation}\label{mr-3}
\left\|T\big((f + f_0) + (g + g_0)\big)\right\| = T(f)(y_0) + T(f_0)(y_0) + T(g)(y_0) + T(g_0)(y_0).
\end{equation}
We infer from \eqref{n-a} that 
\[ 
\|T((f + f_0) + (g + g_0))\| =\|T((f + g) + (f_0 + g_0))\| =\|T(f+g) + T(f_0 + g_0)\|. 
\]
This implies that 
\begin{equation}\label{mr-4}
T(f+g)(y_0) + T(f_0 + g_0)(y_0) \leq \|T((f + f_0) + (g + g_0))\|.
\end{equation}
We deduce from Lemma~\ref{e-1} with $T(f_0), T(g_0)\in \pp{y_0}$ that 
\[ 
T(f_0)(y_0) + T(g_0)(y_0) = T(f_0 + g_0)(y_0).
\]
Combining \eqref{mr-3} and \eqref{mr-4} with the last equality, we have
\begin{equation}\label{s-1}
T(f+g)(y_0) \leq T(f)(y_0) + T(g)(y_0).
\end{equation}

We next prove the reverse inequality. By Lemma~\ref{sub2} and surjectivity of $T$, 
there exists $h \in \pcx$ such that $T(h), T(f+g) + T(h), T(g) + T(h) \in \pp{y_0}$. 
Hence, we obtain 
\begin{equation}\label{mr5-2}
\|T(f+g) + T(h)\| = T(f+g)(y_0) + T(h)(y_0).
\end{equation}
It follows from Remark~\ref{r1} that 
\begin{equation}\label{mr-5}
T(f)(y_0) + T(g + h)(y_0) \leq \|T(f) + T(g + h)\| = \|T(f+g) + T(h)\|.
\end{equation}
Note that $T(h), T(g)+T(h)\in \pp{y_0}$ by the choice of $h\in \pcx$. 
We deduce from  Lemma~\ref{m-1} that 
\[ 
T(g + h)(y_0) = T(g)(y_0) + T(h)(y_0).
\]
Substituting into \eqref{mr-5}, we derive from \eqref{mr5-2} that 
\[
T(f)(y_0) + T(g)(y_0) + T(h)(y_0) \leq T(f+g)(y_0) + T(h)(y_0),
\]
which implies that 
\begin{equation}\label{s2}
T(f)(y_0) + T(g)(y_0) \leq T(f+g)(y_0).
\end{equation}
Combining both inequalities \eqref{s-1} and \eqref{s2}, we obtain $T(f+g)(y_0) = T(f)(y_0) + T(g)(y_0)$ for all $y_0 \in L_2$, and hence $T(f+g) = T(f) + T(g)$.

We now prove that $T$ is positive homogeneous. Fix $f \in \pcx$. We show $T(nf) = nT(f)$ for all $n \in \N$ by induction. The  case $n=1$ is trivial. 
Assume, as the induction hypothesis, that \( T(kf) = kT(f) \) holds for some \( k \in \mathbb{N} \). 
Since $T$ is additive by the above argument, we have 
\[ 
T((k+1)f) = T(kf + f) = T(kf) + T(f) = (k+1)T(f).
\]
This completes the induction step. 
Thus, $T(nf) = nT(f)$ for all $n \in \N$.
Let $r = n/m$ be a positive rational number. 
Then, we observe that 
\[ 
mT(rf) = T\left((mr)f\right) =  T(nf) = nT(f).   
\]
Hence, 
it follows that $T(rf) = rT(f)$ for all positive rational number $r$.  
For irrational $r > 0$, 
we choose two sequences of positive rational numbers 
$\{s_n\}^{\infty}_{n=1}$ and $\{t_n\}^{\infty}_{n=1}$ such that 
$s_n\leq r\leq t_n$ for all $n\in \N$ and $\lim_{n\to\infty}s_n=r=\lim_{n\to\infty}t_n$. 
It follows from  $s_n\leq r$ for $n\in \N$ that  
 $rf=(r-s_n)f+s_n f$ and $(r-s_n)f, s_n f\in \pcx$.  
 As $T$ is additive by the above argument,  
 we see that 
$$
T(s_n f)\leq T(s_n f)+T((r-s_n)f)=T(rf), 
$$ 
and thus, $T(s_n f)\leq T(rf)$. 
Applying the same argument to the pair $(T(rf), T(t_n f))$, 
we obtain $T(rf)\leq T(t_n f)$. 
Hence, we observe that 
$$
T(s_n f)\leq T(r f)\leq T(t_n f)
$$ 
for $n\in \N$.  
Since $T(s_n f)=s_nT(f)$ and $T(t_n f)=t_nT(f)$ for each $n\in \N$, 
it follows that  
 $$
 rT(f)=\lim_{n\to\infty}s_nT(f)\leq T(rf)\leq \lim_{n\to \infty}t_nT(f)=rT(f),  
 $$
 and thus, $T(rf)=rT(f)$. 
Therefore, we conclude that $T(rf)=rT(f)$ for all positive numbers $r$. 
 Since $f\in \pcx$ is arbitrarily chosen, 
 this completes the proof. 
\end{proof}

\section{Application of main result}\label{s-5} 
In this section, 
we prove Corollary~\ref{cor1} as an application of Theorem~\ref{thm1}.
Let $K_i$ be the maximal ideal space of the unital commutative $C^{*}$-algebra $A_i$ with the unit element $1_{A_i}$ and 
$C(K_i)$ the unital commutative $C^{*}$-algebra 
of all complex-valued  continuous functions on a compact Hausdorff space $K_i$ equipped with the supremum norm for $i=1,2$. 
We denote by 
 $\wh{a}\in C(K_i)$ the Gelfand transform of $a\in A_i$ defined by $\wh{a}(\eta)=\eta(a)$ for $\eta\in K_i$. 
By the Gelfand--Naimark theorem \cite[VIII, Theorem~2.1]{Conway}, 
the unital commutative $C^{*}$-algebras $A_i$ and $C(K_i)$ are isometrically *-isomorphic. 
Hence, 
we may regard $a\in A_i$ as a continuous function $\wh{a}$ on $K_i$ for each $a\in A_{i}$. 
In the rest of this section, 
we assume that 
 $T:A_1^{+}\to A_{2}^{+}$ is a bijective mapping satisfying 
\begin{equation}\label{nn1-0}
\left\|T\left(a+b\right)\right\|=\left\|T(a)+T(b)\right\|\qquad(a, b\in A_1^{+}). 
\end{equation}
Define $\wT:C^{+}(K_1)\to C^{+}(K_2)$ by 
\[
\wT(\wh{a})=\wh{T(a)}\qquad(\wh{a}\in C^{+}(K_1)). 
\]
It is easy to check that $\wT: C^{+}(K_1)\to C^{+}(K_2)$ is a well-defined bijective mapping satisfying 
\begin{equation}\label{nn1}
\left\|\wT(\wh{a}+\wh{b})\right\|=\left\|\wT(\wh{a})+\wT(\wh{b})\right\|\qquad(\wh{a}, \wh{b}\in C^{+}(K_1)). 
\end{equation}

Before proving Corollary~\ref{cor1}, we prepare several lemmas. 
The next lemma is essential for demonstrating that the map $\wT:C^{+}(K_1)\to C^{+}(K_2)$ preserves invertible elements.

\begin{lem}\label{nn2}
The map $\wT: C^{+}(K_1) \to C^{+}(K_2)$ is an order isomorphism. 
\end{lem}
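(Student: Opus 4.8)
The plan is to deduce the order-isomorphism property directly from Theorem~\ref{thm1}. Since $K_1$ and $K_2$ are compact Hausdorff spaces, we have $C(K_i)=C_0(K_i)$, and by construction $\wT:C^{+}(K_1)\to C^{+}(K_2)$ is a surjective mapping satisfying \eqref{nn1}. Hence Theorem~\ref{thm1} applies and guarantees that $\wT$ is additive and positive homogeneous. Recall that what we must verify is that $\wT$ is a bijection for which $\wh{a}\leq \wh{b}$ holds if and only if $\wT(\wh{a})\leq \wT(\wh{b})$, for all $\wh{a},\wh{b}\in C^{+}(K_1)$.

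First I would establish the forward implication using additivity together with positivity of the target cone. If $\wh{a}\leq \wh{b}$, then the difference $\wh{b}-\wh{a}$ is a nonnegative continuous function, so $\wh{b}-\wh{a}\in C^{+}(K_1)$. Writing $\wh{b}=\wh{a}+(\wh{b}-\wh{a})$ and invoking additivity of $\wT$ gives $\wT(\wh{b})=\wT(\wh{a})+\wT(\wh{b}-\wh{a})$. Since $\wT(\wh{b}-\wh{a})\in C^{+}(K_2)$ is nonnegative, we conclude $\wT(\wh{a})\leq \wT(\wh{b})$.

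For the reverse implication, the key observation is that the inverse map $\wT^{-1}$ is itself additive. Indeed, given $u,v\in C^{+}(K_2)$, the bijectivity of $\wT$ lets us write $u=\wT(\wh{a})$ and $v=\wT(\wh{b})$; additivity of $\wT$ then yields $\wT(\wh{a}+\wh{b})=u+v$, so that $\wT^{-1}(u+v)=\wh{a}+\wh{b}=\wT^{-1}(u)+\wT^{-1}(v)$. Repeating the argument of the previous paragraph with $\wT$ replaced by the additive bijection $\wT^{-1}$ then shows that $\wT(\wh{a})\leq \wT(\wh{b})$ forces $\wh{a}\leq \wh{b}$.

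Combining the two implications proves that $\wT$ is an order isomorphism. I do not expect a serious obstacle here, since the statement is essentially a formal consequence of additivity once Theorem~\ref{thm1} is in hand; the only points needing care are verifying that the difference of two comparable nonnegative functions stays in the positive cone, so that additivity may legitimately be applied, and confirming that bijectivity transfers additivity from $\wT$ to $\wT^{-1}$.
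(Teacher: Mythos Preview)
Your argument is correct and follows essentially the same route as the paper: apply Theorem~\ref{thm1} to obtain additivity, use additivity plus the fact that $\wh{b}-\wh{a}\in C^{+}(K_1)$ to deduce order preservation, and then repeat the argument for the inverse map. Your explicit verification that $\wT^{-1}$ inherits additivity from the bijective additive $\wT$ is slightly more detailed than the paper's treatment, but the content is the same.
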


\begin{proof}
Since $\wT: C^{+}(K_1) \to C^{+}(K_2)$ satisfies \eqref{nn1}, it follows from Theorem~\ref{thm1} that $\wT$ is additive and positive homogeneous. 
Let $\wh{a}, \wh{b} \in C^{+}(K_1)$ be such that $\wh{a} \leq \wh{b}$. 
Then $\wh{b} - \wh{a} \in C^{+}(K_1)$, and hence, we obtain 
\[
\wT(\wh{a}) \leq \wT(\wh{a}) + \wT(\wh{b} - \wh{a}) = \wT(\wh{b}),
\]
which shows that $\wT$ is order-preserving.

Furthermore, since $\wT$ is bijective, its inverse $\wT^{-1}: C^{+}(K_2) \to C^{+}(K_1)$ is also additive and positive homogeneous. 
By the same argument, we conclude that $\wT^{-1}$ is order-preserving as well. 
Therefore, $\wT$ is an order isomorphism.
\end{proof}

\begin{lem}\label{nn3}
Let \( C^{+}(K_i)^{-1} \) denote the set of all invertible elements in \( C^{+}(K_i) \) for \( i = 1, 2 \). Then the following equality holds:
\[
\wT(\ko) = \kt.
\]
\end{lem}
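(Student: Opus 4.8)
The plan is to give a purely order-theoretic characterization of invertibility in $C^{+}(K_i)$ and then transport it across $\wT$ by means of Lemma~\ref{nn2}. The key observation is that, since each $K_i$ is compact, an element $a \in C^{+}(K_i)$ is invertible in $C(K_i)$ exactly when $a(x) > 0$ for every $x \in K_i$, equivalently when $a \geq \delta\,\unit$ for some $\delta > 0$, where $\unit$ denotes the constant function $1$ on $K_i$. I would first isolate the following criterion: $a \in \ki$ if and only if for every $b \in C^{+}(K_i)$ there exists $n \in \N$ with $b \leq n a$. The virtue of this criterion is that it is phrased entirely in terms of the order relation and of multiplication by natural numbers, both of which $\wT$ respects.

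To verify the criterion, for the forward direction I would use that an invertible $a$ is bounded below by some $\delta > 0$, so that $n a \geq n\delta\,\unit \geq b$ as soon as $n$ exceeds $\sn{b}/\delta$. For the converse I would argue by contraposition: if $a$ vanishes at some point $x_0 \in K_i$, then choosing $b = \unit$ gives $(n a)(x_0) = 0 < 1 = b(x_0)$ for every $n \in \N$, so no such $n$ can exist. This establishes the equivalence.

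With the criterion in hand, the transport is routine. Suppose $a \in \ko$. Given an arbitrary $c \in C^{+}(K_2)$, surjectivity of $\wT$ yields $b \in C^{+}(K_1)$ with $\wT(b) = c$; applying the criterion to $a$ produces $n \in \N$ with $b \leq n a$, and since $\wT$ is order-preserving and positive homogeneous by Lemma~\ref{nn2} together with Theorem~\ref{thm1}, we obtain $c = \wT(b) \leq \wT(n a) = n \wT(a)$. As $c$ was arbitrary, the criterion applied in $C^{+}(K_2)$ shows $\wT(a) \in \kt$, so that $\wT(\ko) \subseteq \kt$. Running the identical argument with the order isomorphism $\wT^{-1}$, which is again additive and positive homogeneous, gives $\wT^{-1}(\kt) \subseteq \ko$; applying $\wT$ to this inclusion yields the reverse containment $\kt \subseteq \wT(\ko)$, and hence equality.

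I expect the only genuine content to lie in formulating the order-theoretic criterion for invertibility; once it is stated correctly, every remaining step is a direct application of order preservation, positive homogeneity, and surjectivity, and I foresee no further obstacle.
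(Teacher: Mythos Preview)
Your proof is correct. It differs from the paper's argument in how invertibility is captured order-theoretically. The paper proceeds more concretely: given $\wh{a}\in\ko$, it uses surjectivity once to pick $\wh{a_1}$ with $\wT(\wh{a_1})=\wh{1_{A_2}}$, then chooses $r>0$ with $r\|\wh{a_1}\|\leq\min\wh{a}$ so that $r\,\wh{a_1}\leq\wh{a}$; applying order preservation and positive homogeneity directly yields $\wT(\wh{a})\geq r\,\wh{1_{A_2}}>0$. Your route instead isolates the general criterion ``$a$ is invertible iff every $b$ satisfies $b\leq na$ for some $n\in\N$'' (i.e.\ $a$ is an order unit) and transports that criterion wholesale. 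The paper's argument is marginally shorter since it only needs to dominate one well-chosen element rather than all of them, while your formulation has the advantage of making explicit that invertibility in $C^{+}(K)$ is a purely order-theoretic notion, so that any order isomorphism respecting $\N$-multiples must preserve it; this would port unchanged to more general ordered cones. Both arguments close with the same symmetry step using $\wT^{-1}$.
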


\begin{proof}
Let \( \wh{a} \in \ko \) and fix it. 
Because \( \wT: C^{+}(K_1) \to C^{+}(K_2) \) is surjective, there exists \( \wh{a_1} \in C^{+}(K_1) \) such that \( \wT(\wh{a_1}) = \wh{1_{A_2}} \). 
Now, \( \wh{a} \in \ko \) implies that there exists \( r > 0 \) such that
\[
r \wh{a_1}(\eta) \leq r \| \wh{a_1} \| \leq \wh{a}(\eta) \quad (\eta \in K_1),
\]
and hence, we obtain \( r \wh{a_1} \leq \wh{a} \). 
Given that \( \wT \) is an order isomorphism and positive homogeneous by  Lemma~\ref{nn2} and Theorem~\ref{thm1}, we have
\[
0 < r = r \wh{1_{A_2}}(\xi) = r \wT(\wh{a_1})(\xi) = \wT(r \wh{a_1})(\xi) \leq \wT(\wh{a})(\xi) \quad (\xi \in K_2),
\]
which shows that \( \wT(\wh{a})(\xi) > 0 \) for all \( \xi \in K_2 \). 
Thus, we observe that \( \wT(\wh{a}) \in \kt \). 
 Therefore, it follows that  \( \wT(\ko) \subset \kt \).

In order to  show the reverse inclusion, we apply the same reasoning to the inverse map \( \wT^{-1}: C^{+}(K_2) \to C^{+}(K_1) \), which is also an order isomorphism and positive homogeneous. It follows that \( \wT^{-1}(\kt) \subset \ko \), and hence \( \kt \subset \wT(\ko) \).
We conclude that \( \wT(\ko) = \kt \).
\end{proof}

We establish the following lemma as a preliminary step toward extending the map \( S: C^{+}(K_1) \to C^{+}(K_2) \) to the whole space \( C(K_1) \).  
In what follows, we will define the extended map
 \( S: C(K_1) \to C(K_2) \) in  proof of Cororally~\ref{cor1}.

\begin{lem}\label{dec}
Let $\widehat{a} \in C(K)$. Then there exist unique elements  
$\widehat{b_i}, \widehat{c_i} \in C^{+}(K)$ such that $\widehat{b_i}\widehat{c_i}=0$ 
for $i=1,2$ 
and 
\[
\widehat{a} = (\widehat{b_1} - \widehat{c_1}) + i(\widehat{b_2} - \widehat{c_2}).
\]
\end{lem}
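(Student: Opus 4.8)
The plan is to reduce the complex-valued decomposition to the real case, and then invoke the standard positive/negative part (Jordan) decomposition of a real-valued continuous function. First I would split $\widehat{a}$ into its real and imaginary parts by writing $\widehat{a} = g_1 + i\,g_2$, where $g_1 = \Re(\widehat{a})$ and $g_2 = \operatorname{Im}(\widehat{a})$ are real-valued elements of $C(K)$, both continuous on the compact Hausdorff space $K$.

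Next, for each real-valued continuous function $g_j$ ($j=1,2$), I would set
\[
g_j^{+} = \max\{g_j, 0\}, \qquad g_j^{-} = \max\{-g_j, 0\}.
\]
Since the pointwise maximum of two continuous functions is continuous and both functions are clearly non-negative, we have $g_j^{+}, g_j^{-} \in C^{+}(K)$. Moreover $g_j = g_j^{+} - g_j^{-}$ and $g_j^{+}\, g_j^{-} = 0$, because at each point of $K$ at most one of $g_j^{+}, g_j^{-}$ is nonzero (their supports are disjoint). Taking $\widehat{b_1} = g_1^{+}$, $\widehat{c_1} = g_1^{-}$, $\widehat{b_2} = g_2^{+}$, $\widehat{c_2} = g_2^{-}$ then yields the required representation $\widehat{a} = (\widehat{b_1} - \widehat{c_1}) + i(\widehat{b_2} - \widehat{c_2})$, establishing existence.

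For uniqueness, I would first observe that, because each $\widehat{b_i}, \widehat{c_i}$ is real-valued, any admissible decomposition must satisfy $\widehat{b_1} - \widehat{c_1} = \Re(\widehat{a}) = g_1$ and $\widehat{b_2} - \widehat{c_2} = \operatorname{Im}(\widehat{a}) = g_2$; thus the problem decouples into uniqueness for each real part separately. It then suffices to show that if $g = b - c$ with $b, c \in C^{+}(K)$ and $bc = 0$, then necessarily $b = g^{+}$ and $c = g^{-}$. This is a pointwise argument: at a point $x$ where $g(x) > 0$, the condition $b(x)c(x) = 0$ forces $c(x) = 0$ (since $c(x) = b(x) - g(x)$ would otherwise require $b(x) > 0$ as well), whence $b(x) = g(x) = g^{+}(x)$ and $c(x) = 0 = g^{-}(x)$; the cases $g(x) < 0$ and $g(x) = 0$ are handled symmetrically. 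The statement is elementary and presents no genuine obstacle; the only point requiring mild care is this uniqueness step, namely verifying that the disjointness condition $\widehat{b_i}\widehat{c_i} = 0$ is exactly what pins the two summands down to be the positive and negative parts.
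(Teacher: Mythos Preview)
Your proof is correct and follows essentially the same approach as the paper: split $\widehat{a}$ into its real and imaginary parts (the paper writes these as $(\widehat{a}\pm\widehat{a^*})/2$ and $(\widehat{a}-\widehat{a^*})/2i$, which for $C(K)$ are exactly $\Re(\widehat{a})$ and $\operatorname{Im}(\widehat{a})$), then apply the Jordan decomposition to each. The only difference is that the paper outsources the existence and uniqueness of the positive/negative part decomposition to \cite[Proposition~4.2.3]{KR}, whereas you prove it directly by the elementary pointwise argument.
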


\begin{proof}
Fix $\widehat{a} \in C(K)$, and denote by \( C_{\mathbb{R}}(K) \) the set of all self-adjoint elements in \( C(K) \), that is, the set of all real-valued continuous functions on \( K \).  
Define  
\[
\widehat{a_1} = \frac{\widehat{a} + \widehat{a^*}}{2} \quad \text{and} \quad \widehat{a_2} = \frac{\widehat{a} - \widehat{a^*}}{2i},
\]
where \( \widehat{a^*} \) denotes the adjoint of \( \widehat{a} \).  
Since \( \widehat{a_1}, \widehat{a_2} \in C_{\mathbb{R}}(K) \), it follows from \cite[Proposition 4.2.3]{KR} that there exist unique elements 
\( \widehat{b_i}, \widehat{c_i} \in C^{+}(K) \) such that $\widehat{b_i}\widehat{c_i}=0$ 
and 
\[
\widehat{a_i} = \widehat{b_i} - \widehat{c_i}
\]
 for  $i = 1, 2$. 
Combining these expressions, we obtain  
\[
\widehat{a} = (\widehat{b_1} - \widehat{c_1}) + i(\widehat{b_2} - \widehat{c_2}).
\]

To prove uniqueness, suppose that there exist elements \( \widehat{b_i'}, \widehat{c_i'} \in C^{+}(K) \) for \( i = 1, 2 \) such that  $\widehat{b_i'}\widehat{c_i'}=0$ and 
\[
\widehat{a} = (\widehat{b_1'} - \widehat{c_1'}) + i(\widehat{b_2'} - \widehat{c_2'}).
\]
By comparing the real part, we have  
\[
\widehat{b_1'} - \widehat{c_1'} = \frac{\widehat{a} + \widehat{a^*}}{2}   = \widehat{b_1} - \widehat{c_1},
\]
and since this lies in \( C_{\mathbb{R}}(K) \), the uniqueness in \cite[Proposition 4.2.3]{KR} yields \( \widehat{b_1'} = \widehat{b_1} \) and \( \widehat{c_1'} = \widehat{c_1} \).  
A similar argument applied to the imaginary part gives  
\( \widehat{b_2'} = \widehat{b_2} \) and \( \widehat{c_2'} = \widehat{c_2} \).  
This completes the proof.
\end{proof}

We are now in position to prove Corollary~\ref{cor1}.

\begin{proof}[\bf Proof of Corollary~\ref{cor1}]

We define $S:C^{+}(K_1)\to C^{+}(K_2)$ by $S(\wh{a})=\wT(\wh{1_{A_1}})^{-1}\wT(\wh{a})$ for $\wh{a}\in C^{+}(K_1)$. 
Since $\wT: C^{+}(K_1)\to C^{+}(K_2)$ is a bijective map which satisfies the condition~\eqref{nn1}, 
we deduce from Theorem~\ref{thm1} that $T$ is additive and positive homogeneous. 
Therefore, the map $S:C^{+}(K_1)\to C^{+}(K_2)$ is also a bijective additive and positive homogeneous map.  
Moreover, it follows from the definiton of the map $S$ with Lemmas~\ref{nn2} and \ref{nn3} that 
the map $S:C^{+}(K_1)\to C^{+}(K_2)$ is an order isomorphism which satisfies 
$S(\wh{1_{A_1}})=\wh{1_{A_2}}$.  
Choose $\wh{a}\in C^{+}(K_1)$ arbitrarily and fix it. 
By Lemma~\ref{dec}, 
there exist unique elements  
$\wh{b_i},\wh{c_i}\in C^{+}(K_1)$ such that 
$\widehat{b_i}\widehat{c_i}=0$ 
for $i=1,2$ and 
$\wh{a}=(\wh{b_1}-\wh{c_1})+i(\wh{b_2}-\wh{c_2})$. 
We define \( S_1(\wh{a})\in  C(K_2) \) by  
\begin{equation}\label{S1}
S_1(\widehat{a})=
(S(\widehat{b_1})-S(\wh{c_1})) +i(S(\widehat{b_2})-S(\wh{c_2})). 
\end{equation}
This definition is well-defined, since the decomposition 
$\widehat{a}= (\widehat{b_1} - \widehat{c_1})+i(\widehat{b_2} - \widehat{c_2})$ 
is unique by Lemma~\ref{dec}. 
Because $\wh{a}\in C(K_1)$ is arbitrarily chosen, 
we conclude that the map $S_1:C(K_1)\to C(K_2)$ is a well-defined map. 
We prove that the map $S_{1}$ is a linear order isomorphism. 
Because the map $S:C^{+}(K_1)\to C^{+}(K_2)$ is bijective, additive and 
positive homogeneous, 
it is straightforward to verify that $S_1: C(K_1)\to C(K_2)$ inherits these properties.

First, we prove that the map  \( S_1 \) satisfies \( S_1(\lambda \wh{a}) = \lambda S_1(\wh{a}) \) for all 
complex numbers \( \lambda \in \mathbb{C} \) and all \( \wh{a} \in C(K_1) \), that is, \( S_1 \) is homogeneous.
Take $\wh{a}\in C(K_1)$ arbitrarily. 
Then there exist unique elements $\wh{b_i}, \wh{c_i}\in C^{+}(K_1)$ for $i=1,2$ 
such that 
$\wh{a}=(\widehat{b_1} - \widehat{c_1}) + i(\widehat{b_2} - \widehat{c_2})$ 
 by Lemma~\ref{dec}. 
Because $-\widehat{a}=(\widehat{c_1} - \widehat{b_1})+i(\widehat{c_2}-\widehat{b_2})$, 
we deduce from the definition of $S_{1}$ that 
\[
S_{1}(-\wh{a})=(S(\widehat{c_1}) - S(\widehat{b_1}))+i(S(\widehat{c_2})-S(\widehat{b_2}))=-S_1(\wh{a}). 
\] 
This implies that $S_1(-\wh{a})=-S_1(\wh{a})$ for all $\wh{a}\in C(K_1)$. 
Since $S_1$ is positive homogeneous, 
we observe that $S_1(r \wh{a})=rS_1(\wh{a})$ for any $r\in \mathbb{R}$ and 
 $\wh{a}\in C(K_1)$. 
Having in mind that 
$i \wh{ a}=(\widehat{c_2} - \widehat{b_2}) + i(\widehat{b_1} - \widehat{c_1})$, 
we obtain 
\[
S_1(i\wh{a})=(S(\wh{c_2})-S(\wh{b_2}))+i(S(\wh{b_1})-S(\wh{c_1}))
           =i\left((S(\wh{b_1})-S(\wh{c_1}))+i(S(\wh{b_2})-S(\wh{c_2}))\right)
           =iS_1(\wh{a}). 
\]
This implies that 
$
S_{1}(\lambda \wh{a})=S_1((r+it)\wh{a})=rS_1(\wh{a})+itS_{1}(\wh{a})=\lambda S_1(\wh{a})
$
 for any  $\lambda=r+it\in \mathbb{C}$ with $r,t\in \mathbb{R}$ and $\wh{a}\in C(K_1)$. 
Hence, we conclude that $S_1$ is homogeneous. 

Next, we show that if $\wh{a_1}\leq \wh{a_2}$, then $S_1(\wh{a_1})\leq S_1(\wh{a_2})$. 
Assume that $\wh{a_1}\leq \wh{a_2}$ for some $\wh{a_1}, \wh{a_2}\in C(K_1)$. 
Then we note that $\wh{a_1}, \wh{a_2}\in C_{\mathbb{R}}(K_1)$. 
By Lemma~\ref{dec}, 
there exist  unique elemens $\wh{b_i}, \wh{c_i}\in C^{+}(K_1)$ such that 
$\wh{a_i}=\wh{b_i}-\wh{c_i}$ for $i=1,2$. 
It follows from $\wh{a_1}\leq \wh{a_2}$ that 
$\wh{b_1}-\wh{c_1}\leq \wh{b_2}-\wh{c_2}$, and hence, we obtain 
$\wh{b_1}+\wh{c_2}\leq \wh{b_2}+\wh{c_1}$. 
Since $S:C^{+}(K_1)\to C^{+}(K_2)$ is additive and an order isomorphism, 
it follows  that 
$S(\wh{b_1})+S(\wh{c_2})\leq S(\wh{b_2})+S(\wh{c_1})$. 
We deduce from $\wh{a_i}=\wh{b_i}-\wh{c_i}$ for $i=1,2$ that 
\[
S_1(\wh{a_1})= S(\wh{b_1})-S(\wh{c_1})\leq S(\wh{b_2})-S(\wh{c_2})=S_1(\wh{a_2}),  
\]
that is, $S_1(\wh{a_1})\leq S_1(\wh{a_2})$. 
Therefore, we conclude that $S_1$ is a bijective linear order homomorphism. 

Because $S:C^{+}(K_1)\to C^{+}(K_2)$ is bijective map, it follows from \eqref{S1} that 
\[
S_1^{-1}(\wh{a})
=(S^{-1}(\widehat{b_1})-S^{-1}(\wh{c_1})) +i(S^{-1}(\widehat{b_2})-S^{-1}(\wh{c_2}))
\]
for any $\wh{a}=(\widehat{b_1} - \widehat{c_1})+i(\widehat{b_2} - \widehat{c_2})\in C(K_2)$ with $\wh{b_i}, \wh{c_i}\in C^{+}(K_2)$ for $i=1,2$. 
Applying the same argument to $S_1^{-1}$, we observe that 
$S^{-1}$ is also a bijective linear order homomorphism. 
Hence, we conclude that 
 $S_1:C(K_1)\to C(K_2)$ is a bijective linear order isomorphism satisfying  
  $(S_1)_{|C^{+}(K_1)}=S$ and $S_{1}(\wh{1_{A_1}})=\wh{1_{A_2}}$, 
  where $(S_1)_{|C^{+}(K_1)}$ denotes the restriction of the map $S_1$ to $C^{+}(K_1)$. 
 We deduce from \cite[Corollary 5]{RK} that $S_1$ is a $C^{*}$-isomorphism. 
Hence, by \cite[Theorem~4.1.8]{KR} and the Banach--Stone Theorem~\cite[p172, Theorem~2.1]{Conway},  there exists a homeomorphism $\tau: K_2\to K_1$ such that 
\[
S_1(\wh{a})(\xi)=\wh{a}(\tau(\xi))\qquad(\wh{a}\in C(K_1), \xi\in K_2). 
\]
Having in mind that $(S_1)_{|C^{+}(K_1)}=S$, we obtain 
$S(\wh{a})(\xi)=\wh{a}(\tau(\xi))$, that is, 
\[
\wT(\wh{a})(\xi)=\wT(\wh{1_{A_1}})(\xi)\wh{a}(\tau(\xi))
\] for all $\wh{a}\in C^{+}(K_1)$ and $\xi\in K_2$. 
\end{proof}

\section{Future Work and Open Problem}
The main results of this paper address the problem under the assumption that the underlying $C^*$-algebras are commutative. 
After discussing this topic with L.~Moln\'{a}r, he kindly pointed out  that the corresponding problem for general (not necessarily commutative) $C^*$-algebras remains unsolved. 
He expressly encouraged me to include this problem in the present paper as an open problem, as it represents a natural and significant direction for future research.

\begin{oproblem}[Moln\'{a}r]
Let $A_i^{+}$ be the positive cone of a (not necessarily commutative) $C^{*}$-algebra $A_{i}$ for $i=1,2$. 
Suppose that $T:A_1^{+} \to A_{2}^{+}$ is a surjective mapping satisfying 
\[
\|T(f)+T(g)\| = \|T(f+g)\| \qquad (f,g \in A_{1}^{+}).
\]
Is the map $T$ necessarily additive in this general, possibly noncommutative setting?
\end{oproblem}

\begin{ack}
The author expresses his sincere gratitude to Lajos Moln\'{a}r for proposing the problem 
and 
for his invaluable guidance and insightful suggestions throughout the development of this manuscript. 
His thoughtful advice has greatly enhanced the quality of this work.

He also wishes to express his deep appreciation to Osamu Hatori 
for engaging in valuable discussions and for offering constructive advice that helped refine the results presented in this paper. 
His support has been instrumental in improving this research.

This work was supported by JSPS KAKENHI Grant Number JP 24K22845. 
\end{ack}

\section*{Declaration of generative AI and AI-assisted technologies in the writing process}

During the preparation of this work, the author used ChatGPT (developed by OpenAI) to improve the clarity, grammar, and overall readability of the manuscript, and to receive feedback on the logical structure of the Introduction. After using this tool, the author reviewed and edited the content as needed and takes full responsibility for the content of the publication.

\end{document}